\newtheorem{thm}{Theorem}[section] 
\newtheorem{cor}[thm]{Corollary}
\newtheorem{lem}[thm]{Lemma}
\newtheorem{prop}[thm]{Proposition}
\newtheorem{rem}[thm]{Remark}
\begin{document}

\title{How do algebras grow?}

\author{Be'eri Greenfeld}

\address{Department of Mathematics, Bar Ilan University, Ramat Gan 5290002, Israel}
\email{beeri.greenfeld@gmail.com}

\begin{abstract}
We construct an increasing, submultiplicative, arbitrarily rapid function $f:\mathbb{N}\rightarrow \mathbb{N}$ which is not equivalent to the growth function of any finitely generated algebra, demonstrating the difficulty in characterizing growth functions in an asymptotic language.
\end{abstract}

\maketitle

\section{Introduction}

\subsection{Growth functions} The question of `how do algebras grow?', or, which functions can be realized as growth functions of algebras (associative/Lie/Jordan/other, or algebras having certain additional algebraic properties) is a major problem in the meeting point of several mathematical fields including algebra, combinatorics, symbolic dynamics and more.

In this note we examine growth functions of infinite dimensional, finitely generated associative algebras. Let $F$ be an arbitrary field and let $R$ be such $F$-algebra. Fixing a finite dimensional generating subspace $R=F[V]$ we define the growth of $R$ to be the function:
$$\gamma_{R,V}(n)=\dim_F V^n$$
This evidently depends on the choice of $V$, but might change only up to the following equivalence relation: $f\sim g$ if $f(n)\leq g(Cn)\leq f(Dn)$ for some $C,D>0$. Therefore when talking about the growth of an algebra we refer to the $\sim$-equivalence class of the function $\gamma_{R,V}(n)$ (for some $V$). We say that $f\preceq g$ if $f(n)\leq g(Cn)$ for some $C>0$. For more on growth functions of algebras, see \cite{KrauseLenagan}.

There are obvious properties necessarily satisfied by such growth functions; they are always:

\begin{itemize}
    \item \textit{Increasing} (namely, $f(n)<f(n+1)$); and
    \item \textit{Submultiplicative} (namely, $f(n+m)\leq f(n)f(m)$).
\end{itemize}  
The main goal in studying the variety of possible growth functions is to investigate to what extent these conditions are in fact sufficient. 

\subsection{Former results} Several attempts have been made to realize as wide as possible variety of such functions as growth functions of associative algebras.

Smoktunowicz and Bartholdi \cite{SmoktunowiczBartholdi} proved that every increasing and submultiplicative function is equivalent to a growth function of an associative algebra, up to a polynomial factor. Namely:
\begin{thm}[{\cite[Theorem~C]{SmoktunowiczBartholdi}}] 
Let $f:\mathbb{N}\rightarrow \mathbb{N}$ be submultiplicative and increasing. Then there exists a finitely generated monomial algebra $B$ whose growth function satisfies: $$f(n)\preceq \dim_F B(n)\preceq n^2f(n).$$
\end{thm}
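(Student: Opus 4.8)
The plan is to realize $B$ as a \emph{monomial} algebra $B=F\langle X\rangle/I$ with $I$ generated by words (``forbidden monomials''). A linear basis of $B$ then consists of the images of the words not in $I$, so — provided we arrange the sequence $|L_n|$ to be non‑decreasing — $\dim_F B(n)$ is equivalent, up to a factor of $n$, to $|L_n|$, where $L=\bigcup_n L_n$ is the factorial language of nonzero monomials of degree $n$. Thus the problem becomes purely combinatorial: given submultiplicative and increasing $f$, build a factorial language $L$ over a finite alphabet with $f(n)\preceq |L_n|\preceq n^2 f(n)$.

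I would begin with two reductions. We may assume $q:=f(1)\ge 2$ (otherwise $f$ is constant and the claim is immediate), and fix an alphabet $X$ with $|X|=q$, or $q+1$ if it is convenient to reserve one letter as a synchronizing separator; since submultiplicativity gives $f(n)\le q^n$, there is room to encode $f(n)$ distinct words of length $n$. Second, submultiplicativity means $\log f$ is subadditive, so $\lim_n \tfrac{\log f(n)}{n}$ exists; using this I would replace $f$ by a $\sim$‑equivalent function whose exponential rate varies slowly along a fixed geometric sequence of scales $1=N_0<N_1<N_2<\cdots$, a smoothing that costs only a factor absorbed into $\preceq$ and which I would not dwell on.

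The construction itself I would carry out in the spirit of $S$‑adic (iterated substitution) subshifts. At scale $k$ one picks a set $W_k$ of ``blocks'' of length $N_k$ with $|W_k|\approx f(N_k)$, and builds $W_{k+1}$ from $W_k$ by a fixed rule: each block of length $N_{k+1}$ is a concatenation of blocks of $W_k$ interleaved with a fixed, highly aperiodic pattern, together with a small ``free part'' — roughly $f(N_{k+1})/f(N_k)$ admissible choices — where $W_{k+1}$ genuinely branches; submultiplicativity is precisely what guarantees $f(N_{k+1})/f(N_k)\le |X|^{N_{k+1}-N_k}$, so this free part fits inside the alphabet. Let $L$ be the factorial closure of $\bigcup_k W_k$. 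For the \emph{lower} bound, the $\approx f(N_k)$ blocks of $W_k$ are pairwise distinct words of length $N_k$, and one checks that a block of scale $k$ carries $\gtrsim f(N_k)/\mathrm{poly}(N_k)$ distinct subwords of each intermediate length $n\in[N_{k-1},N_k]$; with scales geometric and $f$ slowly varying this yields $|L_n|\succeq f(n)$. For the \emph{upper} bound one exploits that the interleaving pattern is synchronizing, so every length‑$n$ subword of any (arbitrarily long) block decodes, modulo a bounded number of ``seams'', to a length‑$n$ subword of a block at the first scale $N_k\ge n$; hence $|L_n|\le |W_k|\cdot O(N_k)=O(f(N_k)N_k)$, and since $N_k=O(n)$ and $f(N_k)=O(f(n))$ at that scale, $|L_n|\preceq n^2 f(n)$ after accounting for the seams and the partial‑sum passage to $\dim_F B(n)$.

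The hard part will be designing the substitution rule so that three requirements hold simultaneously: (i) the block count multiplies correctly, $|W_{k+1}|\approx f(N_{k+1})$; (ii) the glue between consecutive blocks is sufficiently synchronizing that short subwords of long blocks decode essentially uniquely — this is what prevents an exponential blow‑up of $|L_n|$; and (iii) the free part is positioned so that distinct scale‑$(k+1)$ choices really do produce distinct short subwords, securing the matching lower bound. Reconciling (i)--(iii), together with the two‑scale overlap bookkeeping in the subword count, is the technical core; the surrounding steps — the algebra/language dictionary, the separator letter, and the smoothing of $f$ — are routine.
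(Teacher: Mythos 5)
This statement is quoted in the paper as background (it is Theorem~C of Smoktunowicz--Bartholdi) and is not proved there, so the comparison is with the original proof; your architecture --- a monomial algebra attached to a hierarchically built factorial language, with block sets $W_k$ of length $N_k$ and $|W_k|\approx f(N_k)$ at geometric scales, long blocks being concatenations of shorter ones, and factor-counting for both bounds --- is indeed the route that proof takes. But what you have written is a plan, not a proof, and the part you defer is exactly where the theorem lives. The upper bound requires an explicit mechanism restricting which pairs of consecutive level-$k$ blocks can occur across the seams of \emph{all} higher levels; without it, the count of length-$n$ factors picks up a priori a contribution of order $\sum_{m>k}|W_m|$ from seam pairs, which destroys the $n^2f(n)$ bound. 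Calling the interleaving pattern ``synchronizing'' names the desired conclusion rather than constructing it: the original argument has to make a concrete choice of how $W_{k+1}$ sits inside products of elements of $W_k$ (with rigid boundary behaviour) and then verify the seam count, and none of that is present here.

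Two of your auxiliary claims are also shaky. The intermediate-length lower bound, ``a block of scale $k$ carries $\gtrsim f(N_k)/\mathrm{poly}(N_k)$ distinct subwords of each length $n\in[N_{k-1},N_k]$'', cannot hold as stated (a single word of length $N_k$ has at most $N_k$ factors of length $n$), and even for the collection $W_k$ the general inequality points the wrong way: a factorial language with $s$ factors of length $n$ admits at most $s^{\lceil N_k/n\rceil}$ words of length $N_k$, so $|W_k|\approx f(N_k)$ only forces $s\gtrsim f(N_k)^{n/N_k}$, which by submultiplicativity is at most about $f(n)$, not at least $f(n)$. Either the construction must actively create enough length-$n$ factors, or --- simpler, and what the standard argument uses --- one invokes monotonicity of the cumulative growth function: with scales of bounded ratio, $\dim B(2n)\ge|W_k|\approx f(N_k)\ge f(n)$ for the first scale $N_k\ge n$, which already gives $f\preceq\dim B$. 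Finally, the preliminary ``smoothing'' of $f$ to a $\sim$-equivalent function of slowly varying exponential rate is not available in general: increasing submultiplicative functions can oscillate between very different local rates (that is precisely the phenomenon the surrounding paper exploits), and the theorem must cover them; only interpolation of $f$ at the chosen scales, using monotonicity, is legitimate. So the strategy is the right one, but the substitution rule satisfying your (i)--(iii) and the two counting estimates would have to be carried out in full before this constitutes a proof.
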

They deduce the following corollary which allows an accurate realization of `sufficiently regular' rapid growth types:
\begin{cor}[{\cite[Corollary~D]{SmoktunowiczBartholdi}}]
Let $f:\mathbb{N}\rightarrow \mathbb{N}$ be a submultiplicative, increasing, and such that $f(Cn)\geq nf(n)$ for some $C>0$ and all $n\in \mathbb{N}$. Then there exists an associative algebra of growth $\sim f$.
\end{cor}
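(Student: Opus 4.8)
The plan is to take the monomial algebra $B$ produced by Theorem~C and show that, under the extra hypothesis on $f$, its growth function is already $\sim f$ rather than merely sandwiched between $f$ and $n^2f(n)$. Apply Theorem~C to the given submultiplicative, increasing $f$ to obtain a finitely generated monomial algebra $B$ with
\[
f(n)\preceq \dim_F B(n)\preceq n^2 f(n).
\]
Since the growth of a finitely generated algebra is a well-defined $\sim$-class (independent of the generating subspace), it suffices to establish the single additional relation $n^2 f(n)\preceq f$. Indeed, this yields $f\preceq \dim_F B(n)\preceq n^2 f(n)\preceq f$, and transitivity of $\preceq$ then forces $\dim_F B(n)\sim f$, so that $B$ is an associative algebra of growth $\sim f$, as desired.

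It remains to prove $n^2 f(n)\preceq f$, which is a one-step bootstrap of the hypothesis. First note that we may assume $C\in\mathbb{N}$ and $C\ge 2$: replacing $C$ by any larger integer preserves the inequality $f(Cn)\ge nf(n)$ because $f$ is increasing (and one checks $C>1$ necessarily, again using monotonicity). Now apply the hypothesis twice, first at $Cn$ and then at $n$:
\[
f(C^2 n)\;\geq\;(Cn)\,f(Cn)\;\geq\;(Cn)\bigl(nf(n)\bigr)\;=\;Cn^2 f(n)\;\geq\;n^2 f(n).
\]
Hence $n^2 f(n)\le f(C^2 n)$ for all $n$, i.e.\ $n^2 f(n)\preceq f$ with constant $C^2$.

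Combining the two displays gives $\dim_F B(n)\sim f$, completing the argument. The proof is essentially immediate once Theorem~C is in hand; the only points needing (entirely routine) care are the reduction to an integral constant $C$ — together with the convention for evaluating $f$ at non-integer arguments — and the standard verification that $\preceq$ is transitive and behaves well under the monotone substitution $n\mapsto C^2 n$. So I do not expect a genuine obstacle here: the whole content of the corollary is the observation that the growth hypothesis $f(Cn)\ge nf(n)$ makes the polynomial error term $n^2$ in Theorem~C invisible to the equivalence relation $\sim$.
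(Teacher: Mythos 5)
Your argument is correct and is exactly the standard derivation: the paper only quotes Corollary~D from \cite{SmoktunowiczBartholdi} without proof, and the intended deduction from Theorem~C is precisely your bootstrap $f(C^2n)\geq Cn\,f(Cn)\geq Cn^2f(n)\geq n^2f(n)$, which makes the polynomial factor $n^2$ invisible to $\sim$. Your handling of the side issues (integrality of $C$, transitivity of $\preceq$) is fine and needs no change.
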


It should be mentioned that the above constructions were modified by the author in \cite{BGJalg, BGIsr} to construct prime, primitive and simple algebra of prescribed growth rates; these also yield the existence of finitely generated simple Lie algebras with arbitrary growth functions satisfying the conditions of the above corollary (arguments will appear elsewhere).

Bell and Zelmanov \cite{BellZelmanov} found an additional condition (on discrete derivatives) satisfied by all growth functions; their remarkable achievement is that in fact, \textit{every increasing function satisfying this condition is equivalent to a growth function of an associative algebra}. They proved:
\begin{thm}[{\cite[Theorem~1.1]{BellZelmanov}}] 
A growth function of an algebra is asymptotically equivalent to a constant function, a linear function, or a weakly increasing function $F:\mathbb{N}\rightarrow\mathbb{N}$ with the following properties:
\begin{enumerate}
    \item $F'(n)\geq n+1$ for all $n\in \mathbb{N}$;
    \item $F'(m)\leq F'(n)^2$ for all $m\in \{n,\dots,2n\}$.
\end{enumerate}
Conversely, if $F(n)$ is either a constant function, a linear function, or a weakly increasing function
with the above properties then it is asymptotically equivalent to the growth function of an finitely generated algebra.
\end{thm}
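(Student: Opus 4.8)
\smallskip
\noindent\textit{Proof proposal.} I would attack the two implications by entirely different means: necessity is a counting statement about factorial languages, while the converse demands an explicit combinatorial construction sharpening the one behind Theorem~C. \emph{Necessity.} Let $R=F[V]$ with $1\in V$ and set $\gamma(n)=\dim_{F}V^{n}$; if $\gamma$ is bounded then $\gamma\sim$ a constant, so assume $R$ is infinite dimensional. Fix an admissible order on the free algebra $F\langle X\rangle$ on a basis $X$ of $V$ and write $R=F\langle X\rangle/I$. The words that are not the leading word of any element of $I$ --- the \emph{normal} words --- form an $F$-basis of $R$ compatible with the degree filtration, and they are simultaneously a basis of the monomial algebra $F\langle X\rangle/\mathrm{in}(I)$; hence $\gamma(n)=\sum_{k\le n}b(k)$, where $b(k)$ counts the normal words of length $k$, and the set $L$ of normal words is \emph{factorial} (a factor of a normal word is normal). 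Splitting a length-$\ell$ word into a prefix of length $\lceil\ell/2\rceil$ and a suffix of length $\lfloor\ell/2\rfloor$, both again in $L$, yields $b(\ell)\le b(\lceil\ell/2\rceil)\,b(\lfloor\ell/2\rfloor)$; once $b$ is known to be weakly increasing this gives $b(\ell)\le b(\lceil\ell/2\rceil)^{2}$, i.e.\ (writing $\gamma'$ for the forward difference) $\gamma'(m)=b(m+1)\le b(n+1)^{2}=\gamma'(n)^{2}$ for $n\le m\le 2n$, which is condition~(2). For the monotonicity and for condition~(1) I would run the Morse--Hedlund argument on $L$: after deleting the words that fail to extend arbitrarily far on both sides --- a modification leaving the $\sim$-class of $\gamma$ unchanged --- every length-$(n+1)$ word restricts onto a length-$n$ word, so $b$ is weakly increasing; if $b(n_{0}+1)=b(n_{0})$ for some $n_{0}$ this restriction is a bijection, unique one-sided extensions propagate, and $\gamma$ is linear; otherwise $b$ is (eventually) strictly increasing, so $b(m)\ge m$ for large $m$ and, after a harmless shift of $\gamma$, $\gamma'(n)\ge n+1$, which is condition~(1).

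\smallskip
\noindent\textit{Converse.} The constant and linear cases are realized by $F$ and $F[x]$. So assume $F$ is weakly increasing with (1) and (2); note (1) already forces $F$ to dominate a quadratic. Put $a(n)=F(n)-F(n-1)$, so that $F(n)=F(0)+\sum_{k=1}^{n}a(k)$. It suffices to produce a finite alphabet $X$ and a factorial language $L\subseteq X^{*}$ whose counting function $b(n)=|L\cap X^{n}|$ satisfies $\sum_{k\le n}b(k)\sim F(n)$, for then the desired algebra is $F\langle X\rangle/J$ with $J$ the two-sided ideal generated by the words not in $L$. Since $\sim$ is blind to rescalings of the argument, $b$ need not copy $a$ on the nose --- it is enough that $b$ track $a$ up to a linear change of the length variable and up to bounded factors. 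I would build $L$ one length-block at a time, refining the construction behind Theorem~C: on each block one alternates \emph{branching} stretches, where every currently admissible word is extended in several ways, with \emph{inert} stretches, where it is extended by a single fixed letter (mimicking $F[x]$); the block lengths and the branching multiplicities are calibrated so that after each block the accumulated count $\sum_{k\le n}b(k)$ sits on the prescribed curve. Factoriality is enforced by admitting a word only once all of its factors have been admitted at the relevant earlier stages, which --- over a finite alphabet --- forces the branching instructions to be encoded positionally inside the words.

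\smallskip
\noindent\textit{The main obstacle} is precisely this last construction. One must pin $b(n)$ down \emph{from both sides at once} --- from below to realize the mandated derivative $F'$, from above so as not to overshoot $F$ --- all while keeping $L$ factorial over a \emph{fixed} finite alphabet. Over an unbounded alphabet the problem is easy (attach fresh letters as bookkeeping labels); the finiteness of $X$ is what bites, and it is here that the hypotheses are consumed: condition~(1) provides enough ``bandwidth'' per unit of length to encode the branching the prescribed growth requires, while condition~(2) --- which is nothing but the inequality $b(2n)\le b(n)^{2}$ that any factorial language obeys at dyadic scales --- ensures the demanded branching never exceeds what such a language can supply. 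Everything else (the two degenerate cases, the Gr\"obner/normal-words reduction, the Morse--Hedlund dichotomy) is comparatively routine; the crux is the block-by-block bookkeeping that routes the extensions so that the partial sums follow the prescribed curve.
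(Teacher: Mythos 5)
This statement is not proved in the paper at all: it is quoted verbatim from Bell and Zelmanov and used as a black box (the only fragment reproduced here is the easy counting observation behind condition (2), in the Remark of Section 2). So your attempt has to stand on its own, and it does not. The substantive content of the theorem is the converse direction, and for it you give a plan rather than a proof: you reduce to constructing a factorial language $L$ over a fixed finite alphabet whose counting function $b$ tracks $F'$ up to rescaling, and then you yourself label the block-by-block calibration of $b$ --- bounded from below to realize the prescribed derivative, from above so as not to overshoot, while preserving factoriality over a fixed alphabet --- as ``the main obstacle.'' That obstacle \emph{is} the theorem; the remark that conditions (1) and (2) should provide enough ``bandwidth'' is a plausibility argument, not a construction, and no mechanism is given for how the branching instructions are actually encoded or how factoriality is certified once words are admitted block by block. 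As written, the converse has no proof.

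The necessity half also leans on an unjustified step. You delete the words of $L$ that do not extend arbitrarily far on both sides and assert that this ``leaves the $\sim$-class of $\gamma$ unchanged''; that is exactly what lets you claim $b$ is weakly increasing (needed for your derivation of (2), since splitting a word of length $m+1$ only gives $b(m+1)\leq b(n+1)\,b(m-n+1)$) and what feeds the Morse--Hedlund dichotomy. But no argument is offered that discarding the non-extendable words preserves the growth class of the \emph{partial sums} $\gamma(n)=\sum_{k\le n}b(k)$, and this is not obvious: the raw factor-count $b$ of a factorial language need not be monotone, and the dead words are counted by $\gamma$. The clean route (and the one the cited authors take) is to prove condition (2) directly for the original counting data, as in the paper's Remark in Section 2, and to obtain the constant/linear/``$F'(n)\geq n+1$'' trichotomy from Bergman's gap theorem together with an explicit replacement of $\gamma$ by an equivalent function $F$; your ``harmless shift'' gestures at this but does not verify that the adjusted function still satisfies (2) for all $m\in\{n,\dots,2n\}$. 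In short: the necessity sketch is repairable but has a real hole, and the converse --- the hard direction --- is missing.
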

As the writers suggest, one can interpret this theorem as saying that other than the necessary condition that $F'(m)\leq F'(n)^2$ for all $m\in \{n,\dots,2n\}$, which is related to submultiplicativity, the only additional
constraints required for being realizable as a growth function of an algebra are those coming from Bergman's gap theorem \cite{BergmanGap} (which asserts that a super-linear growth function must be at least quadratic) and the elementary "gap" that an algebra cannot have strictly sublinear growth that is not constant. However, it seems that there is no natural characterization of whether a given function is equivalent to a function satisfying the above condition on discrete derivatives.

We remark that there exist extremely pathological examples of oscillating growth of algebras: Trofimov \cite{Semigroup} showed that for every $f_{-}(n)\succ n^2$ and $f_{+}\prec \exp(n)$ there exists a $2$-generated semigroup with growth function infinitely often smaller than $f_{-}$ and infinitely often larger than $f_{+}$. This was improved by Belov, Borisenko and Latyshev in \cite{BBL1997}; such examples cannot be found within the class of groups.

\subsection{Our aim} In this note we construct an example emphasizing the difficulty of the fundamental questions of characterizing growth functions of algebras.
Namely, we prove:
\begin{thm}\label{main thm submul}
Let $g:\mathbb{N}\rightarrow\mathbb{N}$ be a subexponential function. Then there exists an increasing, submultiplicative function $f:\mathbb{N}\rightarrow \mathbb{N}$ such that $f\succeq g$ and $f$ is not equivalent to the growth function of any finitely generated algebra.
\end{thm}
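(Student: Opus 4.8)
The plan is to reduce to the Bell--Zelmanov characterisation recalled above and then exhibit $f$ as an explicit function oscillating between a slow and a fast growth regime, calibrated so finely that, after any bounded rescaling of the argument, the oscillation remains incompatible with the discrete--derivative condition satisfied by every growth function. For the reduction, one may first enlarge $g$ and assume it is increasing, submultiplicative and $g\succeq n^{2}$ (this only strengthens the requirement $f\succeq g$). Suppose for contradiction that $f$ is equivalent to the growth function $\gamma$ of a finitely generated algebra. Since $f\succeq g$ is strictly superlinear, $\gamma$ is equivalent neither to a constant nor to a linear function, so by the Bell--Zelmanov theorem there is a weakly increasing $F\colon\mathbb{N}\to\mathbb{N}$ with $F\sim\gamma\sim f$ and
\[
F'(n)\ge n+1 \qquad\text{and}\qquad F'(m)\le F'(n)^{2} \text{ for all } n \text{ and all } m\in\{n,\dots,2n\}.
\]
Fix $C\ge 1$ with $f(\lfloor n/C\rfloor)\le F(n)\le f(Cn)$ for all $n$. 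It therefore suffices to produce, for the given $g$, an increasing submultiplicative $f\succeq g$ for which no such pair $(F,C)$ exists.

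For the construction, choose a very rapidly increasing sequence of scales $1=n_{0}<n_{1}<n_{2}<\cdots$. On the ``slow'' blocks $[n_{2k},n_{2k+1}]$ one lets $f$ grow as slowly as is compatible with staying at least $\asymp g$ (essentially tracking $g$), and on the intervening ``fast'' blocks $[n_{2k+1},n_{2k+2}]$ one lets $f$ grow as fast as submultiplicativity allows --- like the exponential $m\mapsto f(n_{2k+1})^{m/n_{2k+1}}$ --- choosing $n_{2k+2}$ so that the number $\log_{2}(n_{2k+2}/n_{2k+1})$ of ``doublings spent'' in the $k$-th fast block is the least for which $f$ reaches the next prescribed value. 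One tunes the scales so that, measured in the scaling--invariant parameter relevant to the constraint $F'(m)\le F'(n)^{2}$ --- roughly $\log(\text{fast rate})/\log(\text{preceding slow rate})$ per doubling --- the $k$-th oscillation becomes sharper as $k\to\infty$, faster than any constant, while the raw submultiplicativity bound for $f$ itself stays satisfied (indeed with equality throughout each fast block). Routine estimates then give that $f$ is strictly increasing, submultiplicative and $f\succeq g$.

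To derive the contradiction, examine the $k$-th oscillation with $k$ much larger than $C$. The inequalities $f(\lfloor n/C\rfloor)\le F(n)\le f(Cn)$ pin $F$, up to a bounded factor, to $f$ on all but a bounded number of the dyadic scales inside the $k$-th block, so $F$ must realise essentially the same passage from the slow to the fast regime; but the two Bell--Zelmanov conditions together force such a passage to occupy at least $\log_{2}(\log(\text{fast rate})/\log(\text{slow rate}))$ doublings, which --- by the tuning of the construction --- exceeds the number available to $F$ inside the $C$-distorted copy of the $k$-th block once $k\gg C$. Hence $F$ violates $F'(m)\le F'(n)^{2}$ at some scale inside that block, a contradiction. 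Since $C$ was arbitrary, $f$ is equivalent to no growth function of a finitely generated algebra.

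The main obstacle is the calibration in the construction. The oscillation must be \emph{sharp enough} that no fixed rescaling smooths it into a function satisfying the Bell--Zelmanov conditions, yet $f$ must remain submultiplicative and $f\succeq g$ for an \emph{arbitrary} subexponential $g$ --- and these pull against one another, since a genuinely slow block is precisely what makes an oscillation unsmoothable, whereas for $g$ close to exponential the constraint $f\succeq g$ forbids $f$ from ever growing much more slowly than $g$. Isolating a single quantitative invariant of the transitions that is at once unbounded in $k$, invariant under all bounded rescalings, bounded along every function satisfying the Bell--Zelmanov conditions, and compatible both with submultiplicativity and with $f\succeq g$, is the technical heart of the proof; everything else (the verification that $f$ is increasing and submultiplicative, and the bookkeeping of how $F$ inherits the oscillation through the equivalence with $f$) should be routine once the parameters are chosen correctly.
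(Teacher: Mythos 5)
Your overall plan coincides with the paper's: an oscillating construction with nearly-flat ``slow'' blocks and ``fast'' blocks run at the maximal rate submultiplicativity permits (the paper's per-step factor $2^{1/(2d_1\cdots d_k)}$ on $[d_kn_k,n_{k+1}]$ is exactly your $f(\text{block start})^{1/\text{block start}}$), and a contradiction obtained from the Bell--Zelmanov discrete-derivative condition after diagonalizing over the distortion constant $C$ (the paper's Proposition \ref{property_growth} is precisely the quantitative ``doublings needed versus doublings available'' statement you sketch, derived from the iterated form $\gamma'(m)\le\gamma'(n)^d$ for $m\in\{n,\dots,dn\}$, and Proposition \ref{notequiv} carries out the diagonalization by letting the $k$-th block handle $C=k$ via $n_k=km_k$). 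So the route is the same; the issue is that your proposal stops exactly where the work is.

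The genuine gap is the calibration you yourself defer as ``the technical heart,'' and the one concrete prescription you do give for it --- slow blocks ``essentially tracking $g$'' --- is the wrong one. If $g$ is close to exponential, say $g(n)=2^{n/\log n}$, then a slow block that tracks $g$ ends with per-step ratio about $2^{1/\log n_{2k+1}}$, while the ``maximal'' fast rate you then switch to is $f(n_{2k+1})^{1/n_{2k+1}}\approx 2^{1/\log n_{2k+1}}$ as well: the two regimes have the same rate, the oscillation has no sharpness, and no violation of $F'(m)\le F'(n)^2$ can be extracted, for this $g$ or any $g$ near the exponential threshold. The paper resolves the tension differently and this is the missing idea: the slow blocks are \emph{additively} incremented ($f(x)=f(x-1)+x+1$), hence essentially flat, but have only \emph{bounded multiplicative width} $d_k$, and they are placed beyond the point where $\omega(x)<\frac{1}{2d_1\cdots d_k}$ (writing $g(n)\le 2^{n\omega(n)}$ with $\omega\to 0$); then $f\succeq g$ across the flat block comes from the height $f$ has already accumulated (Lemma \ref{lower bound f}), not from tracking $g$, and the fast rates are deliberately decreased from block to block. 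Relatedly, your claim that increasingness, submultiplicativity and $f\succeq g$ follow by ``routine estimates'' is not accurate for this kind of spliced function: the submultiplicativity verification is where the parameter constraints actually bind (e.g.\ $d_k\ge n_{k-1}/(2d_1\cdots d_{k-2})$, $f(d_kn_k)\le 2^{1/3}f(n_k)$, Condition (I)), and it occupies the bulk of the paper. Until you specify a calibration that simultaneously keeps $f$ submultiplicative, keeps $f\succeq g$ for an arbitrary subexponential $g$, and makes the slow-to-fast contrast unbounded in a $C$-uniform way, the proof is not complete.
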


We mention that this in particularly implies that $f$ constructed above is also not equivalent to the growth function of any group; Bartholdi and Erschler \cite{BartholdiErschler} proved that any function $f:\mathbb{N}\rightarrow \mathbb{N}$ which grows uniformly faster\footnote{In the sense that $f(2n)\leq f(n)^2\leq f(\eta n)$ for $n\gg 1$.} than $\exp(n^\alpha)$ are equivalent to growth functions of groups ($\alpha=\log 2/\log \eta\approx 0.7674$ where $\eta$ is the positive root of $X^3-X^2-2X-4$). They also leave open the question of providing a complete characterization of growth functions of groups.

By Bergman's gap theorem, the function $n^{\alpha}$ for $\alpha\in(1,2)$, which is increasing and submultiplicative, is not equivalent to the growth function of an algebra; no similar gap theorem is valid for other polynomially bounded functions. Theorem \ref{main thm submul} shows that the two necessary conditions of being increasing and sumbultiplicative are \textit{not sufficient even for `sufficiently rapid' functions}, thereby emphasizing the significance of the polynomial factor in \cite[Theorem~C]{SmoktunowiczBartholdi}. This phenomenon hints that there is in fact no characterization in an asymptotic language of growth functions within the class of increasing and submultiplicative functions; this might justify and emphasize the importance of using new characteristics of functions in the attempt to characterize growth functions, such as discrete derivatives as done in \cite{BellZelmanov}.

\section{Preliminary results}

By a result of Bell and Zelmanov \cite[Proposition~2.1]{BellZelmanov}, if $\gamma$ is a growth function of an finitely generated algebra then $\gamma'(m)\leq \gamma'(n)^2$ for every $m\in\{n,\dots,2n\}$, where $\gamma'(n) = \gamma(n)-\gamma(n-1)$. Their proof yields:
\begin{rem}
Assume $\gamma$ is a growth function of an algebra. Let $d \in \mathbb{N}$. Then 
$\gamma'(m)\leq \gamma'(n)^d$ for every $m\in\{n,\dots,dn\}$.
\end{rem}
\begin{proof}
We may assume the algebra is monomial, so $\gamma'(n)$ is the number of (nonzero) words of length $n$ in the generators.
But if $n \leq m \leq dn$ then every word of length $m$ is a prefix of a product of $d$ words of length $n$.
\end{proof}

This is used in the next proposition.

\begin{prop} \label{property_growth}
Suppose $f:\mathbb{N}\rightarrow \mathbb{N}$ is equivalent to a growth function $\gamma:\mathbb{N}\rightarrow \mathbb{N}$ of an finitely generated algebra. Then there exists $C\in \mathbb{N}$ such that for all $D\gg 1$, for all $n$ we have: $$f(2CDn)-f(2CDn-C)\leq 2D^2n (f(CDn)-f(Cn-C))^{2D}.$$
\end{prop}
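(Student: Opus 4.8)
The plan is to transport the inequality --- which is written purely in terms of $f$ --- back to the genuine growth function $\gamma$, where the Remark above applies, and then to transport the resulting bound back to $f$. First I would fix integers $a,b\in\mathbb{N}$ witnessing $f\sim\gamma$, so that $f(n)\le\gamma(an)$ and $\gamma(n)\le f(bn)$ for all $n$ (monotonicity of $f$ and $\gamma$ lets one arrange these for every $n$, not just eventually, after enlarging $a,b$). I would then set $C:=b$ and only afterwards let $D$ be large; in fact $D\ge ab$ will turn out to suffice.

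For the left-hand side: since $f$ is increasing, $f(2bDn)\le\gamma(2abDn)$ and $f(2bDn-b)\ge\gamma(2Dn-1)$, so
\[
f(2CDn)-f(2CDn-C)\ \le\ \gamma(2abDn)-\gamma(2Dn-1)\ =\ \sum_{j=2Dn}^{2abDn}\gamma'(j).
\]
Here is the one real idea: take the reference point $p:=abn=aCn$. Since $D\ge ab\ge1$ we have $2Dn\ge abn=p$, so every index $j$ occurring in the sum lies in $\{p,\dots,2abDn\}=\{p,\dots,2D\cdot p\}$; applying the Remark with $d=2D$ bounds each $\gamma'(j)$ there by $\gamma'(p)^{2D}$, and as the sum has at most $2abDn$ terms,
\[
f(2CDn)-f(2CDn-C)\ \le\ 2abDn\cdot\gamma'(aCn)^{2D}.
\]

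For the right-hand side I would use the same comparisons: $f(bDn)\ge\gamma(Dn)$ and $f(bn-b)\le\gamma(abn-ab)$, so by telescoping
\[
f(CDn)-f(Cn-C)\ \ge\ \sum_{j=abn-ab+1}^{Dn}\gamma'(j)\ \ge\ \gamma'(abn)=\gamma'(aCn),
\]
where the last step uses only that $abn$ lies in the summation range (true since $D\ge ab$ gives $Dn\ge abn$) and that each $\gamma'(j)\ge0$. Combining the two displayed bounds with $2abDn\le2D^2n$ (again because $D\ge ab$) and with $0\le\gamma'(aCn)\le f(CDn)-f(Cn-C)$ gives
\[
f(2CDn)-f(2CDn-C)\ \le\ 2D^2n\bigl(f(CDn)-f(Cn-C)\bigr)^{2D},
\]
which is the assertion.

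The only substantive point --- and the reason the exponent on the right is exactly $2D$ --- is the calibration in the middle step: one must evaluate $\gamma'$ on the right at a single scale $p$ that simultaneously sits at or below the bottom $2Dn$ of the left-hand sum and satisfies $2D\cdot p=2abDn$, the top of that sum, and the choice $p=abn$ together with $C=b$ arranges both at once, forcing the Remark's multiplier to be $d=2D$. Everything else is routine bookkeeping: checking $2Dn\ge abn$, $Dn\ge abn$ and $2abDn\le2D^2n$ for all $n\ge1$ once $D\ge ab$, and treating the degenerate argument $bn-b$ when $n$ is smallest (which needs nothing if $0\in\mathbb{N}$, and is a harmless special case otherwise).
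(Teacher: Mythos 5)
Your proof is correct and follows essentially the same route as the paper's: transport the difference to $\gamma$ via the equivalence, telescope it into a sum of discrete derivatives $\gamma'$, bound those by a power of $\gamma'$ at a single reference scale using the Remark, and then bound that $\gamma'$ by $f(CDn)-f(Cn-C)$ via the equivalence again. The only difference is bookkeeping: the paper lets $D$ itself serve as the (arbitrarily large) second equivalence constant and applies the Remark twice (with $d=D$ at scale $2Dn$ and $d=2$ to pass from $\gamma'(2Dn)$ to $\gamma'(Dn)^2$), whereas you fix both equivalence constants $a,b$, set $C=b$, require $D\geq ab$, and apply the Remark once with $d=2D$ at scale $abn$ --- a harmless variation.
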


\begin{proof}
Write $\gamma(n)\leq f(Cn)\leq \gamma(Dn)$ for some $C,D$ (we can take $D$ arbitrarily large).
Set $h(n)=f(Cn)$ and $\varphi(n)=\gamma(Dn)$. Then $h(n)\leq \varphi(n)\leq h(Dn)$.
Observe that: $$h'(n)=f(Cn)-f(Cn-C)\leq \gamma(Dn)-\gamma(n-1)=\sum_{k=n}^{Dn} \gamma'(k)\leq Dn\gamma'(n)^D.$$
Note also that: 
\begin{eqnarray*}
\gamma'(Dn) & =& \gamma(Dn)-\gamma(Dn-1) \\ 
& \leq & \gamma(Dn)-\gamma(Dn-D) \\
& = & \varphi(n)-\varphi(n-1) \\
& \leq & h(Dn)-h(n-1).
\end{eqnarray*}
Putting these together, we get that:
\begin{eqnarray*}
f(2CDn)-f(2CDn-C) & = & h'(2Dn) \\
& \leq & 2D^2n\gamma'(2Dn)^D 
\\
& \leq & 2D^2n\gamma'(Dn)^{2D} \\ 
& \leq & 2D^2n (h(Dn)-h(n-1))^{2D} \\ & = & 2D^2n (f(CDn)-f(Cn-C))^{2D},
\end{eqnarray*}
as desired.
\end{proof}

\section{A construction of a submultiplicative function}\label{construction of f}

Let $1 < d_1<d_2<\cdots$ be an increasing sequence, and $n_1,n_2,\dots$ a sequence such that 
$$n_1 < d_1 n_1 < n_2 < d_2 n_2 < n_3 < \cdots.$$ 
Both sequences are to be restricted in the sequel
by conditions of the form ``$d_k$ is greater than a function of $\{d_i,n_i\}_{i=0}^{k-1}$'' and ``$n_k$ is greater than a function of $\{d_i,n_i\}_{i=0}^{k-1}$ and $d_k$''.

\subsection{The interval $[1,n_2]$}
We will define a function $f:\mathbb{N}\rightarrow \mathbb{N}$, first by defining it on the domain $[1,n_2]$:
\begin{itemize}
\item For $x\leq n_1$, take $f(x)=2^x$;
\item For $n_1<x\leq d_1n_1$, take $f(x)=f(x-1)+x+1$;
\item For $d_1n_1<x\leq n_2$, take $f(x)=\lfloor 2^{1/{2d_1}}f(x-1)\rfloor$.
\end{itemize}

Denote $\alpha_1=f(d_1n_1)-f(n_1)
< d_1^2n_1^2$. Since $\alpha_1$ is polynomial with respect to $n_1$ (assuming $d_1$ was fixed), if we take $n_1\gg 1$ then we may assume that $f(d_1n_1) = 2^{n_1}+\alpha_1\leq 2^{n_1+\frac{1}{3}}$.
We will also need the following fact:

\begin{lem}\label{floor}
Given $c>1$ and $\varepsilon>0$, for all $a_0\gg 1$ the sequence $a_{k+1}=\lfloor ca_k \rfloor$ satisfies $c^{k-\varepsilon}a_0\leq a_k\leq c^ka_0$.
\end{lem}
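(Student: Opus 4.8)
The plan is to treat the two inequalities separately, each by a straightforward induction on $k$, and to see that the only place any hypothesis is needed is in making the lower bound uniform.

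The upper bound $a_k \le c^k a_0$ is immediate: since $\lfloor x \rfloor \le x$ we have $a_{k+1} = \lfloor c a_k \rfloor \le c a_k$, and iterating from $a_0$ gives $a_k \le c^k a_0$ for every $k$, with no restriction on $a_0$.

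For the lower bound I would use the complementary elementary estimate $\lfloor x \rfloor \ge x - 1$, so that $a_{k+1} \ge c a_k - 1$. Iterating this recursion turns the error terms into a geometric sum: one proves by induction that $a_k \ge c^k a_0 - (1 + c + \cdots + c^{k-1}) = c^k a_0 - \frac{c^k - 1}{c - 1}$, and hence $a_k \ge c^k \bigl( a_0 - \frac{1}{c-1} \bigr)$. It then suffices to arrange $c^k \bigl( a_0 - \frac{1}{c-1} \bigr) \ge c^{k-\varepsilon} a_0$; dividing through by $c^k$, this is the single inequality $a_0 \bigl( 1 - c^{-\varepsilon} \bigr) \ge \frac{1}{c-1}$, i.e. $a_0 \ge \frac{1}{(c-1)(1 - c^{-\varepsilon})}$.

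The point worth emphasizing — and the reason the statement is phrased as ``for all $a_0 \gg 1$'' — is that this final threshold depends only on $c$ and $\varepsilon$ and not on $k$: since $c > 1$ the quantity $c - 1$ is a fixed positive number, and since $\varepsilon > 0$ and $c > 1$ we have $c^{-\varepsilon} < 1$, so $1 - c^{-\varepsilon}$ is also a fixed positive number. Thus once $a_0$ exceeds $\frac{1}{(c-1)(1 - c^{-\varepsilon})}$, the bound $c^{k-\varepsilon} a_0 \le a_k \le c^k a_0$ holds simultaneously for every $k$. There is no genuine obstacle here; the only thing to be careful about is to keep the threshold on $a_0$ uniform in $k$ and not to lose a $k$-dependent additive error when converting the estimate $a_k \ge c^k a_0 - \frac{c^k - 1}{c-1}$ into the desired multiplicative form $c^{k-\varepsilon} a_0$.
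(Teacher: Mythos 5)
Your proof is correct and follows essentially the same route as the paper: both rest on the inductive estimate $a_k \geq c^k a_0 - \frac{c^k-1}{c-1}$ obtained from $\lfloor ca_k\rfloor \geq ca_k - 1$, together with the trivial upper bound $a_k \leq c^k a_0$. If anything, your version is slightly cleaner, since dividing by $c^k$ makes the threshold $a_0 \geq \frac{1}{(c-1)(1-c^{-\varepsilon})}$ explicitly uniform in $k$, a point the paper's concluding limit statement leaves implicit.
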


\begin{proof}
By induction $a_k\geq c^ka_0-\frac{c^k-1}{c-1}$, so: 
$$a_k-c^{k-\varepsilon}a_0\geq (c^k-c^{k-\varepsilon})a_0-\frac{c^k-1}{c-1}\xrightarrow{a_0\rightarrow \infty}\infty.$$
\end{proof}

Using Lemma \ref{floor} (taking $c = 2^{\frac{1}{2d_1}}$, $\varepsilon = 2^{-3}$), we can take $n_1\gg 1$ so that if $x\geq d_1n_1$ then $f(x)\geq f(d_1n_1)\cdot 2^{\frac{x-d_1n_1}{2d_1}-2^{-3}}$.
It is evident that $f$ is increasing in $[1,d_1n_1]$; it is also increasing in $[d_1n_1,n_2]$ if we only make sure $n_1$ is large enough. 
We now turn to prove that $f$ is submultiplicative.

\begin{prop}\label{f submul 0}
The function $f:[1,n_2]\rightarrow \mathbb{N}$ constructed above is submultiplicative.
\end{prop}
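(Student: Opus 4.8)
The plan is to verify submultiplicativity $f(x+y) \leq f(x)f(y)$ for all $x,y$ with $x+y \leq n_2$ by a case analysis on which of the three defining regimes the arguments $x$, $y$, and $x+y$ fall into. The guiding intuition is that $f$ grows at rate $2^{x}$ on $[1,n_1]$, slows to near-linear (additive) growth on $(n_1, d_1n_1]$, and then resumes exponential growth at the reduced rate $2^{x/(2d_1)}$ on $(d_1 n_1, n_2]$. The crucial numerical facts already recorded are: $f(n_1) = 2^{n_1}$; $f(d_1 n_1) = 2^{n_1} + \alpha_1 \leq 2^{n_1 + 1/3}$ with $\alpha_1 < d_1^2 n_1^2$; and the lower bound $f(x) \geq f(d_1 n_1)\cdot 2^{(x - d_1 n_1)/(2d_1) - 2^{-3}}$ for $x \geq d_1 n_1$. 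Since $f$ is increasing, we always have $f(x+y) \geq f(x), f(y)$, so the content is entirely the upper bound.

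First I would dispose of the easy regime: if $x + y \leq n_1$ then $f(x+y) = 2^{x+y} = 2^x 2^y = f(x)f(y)$, with equality. Next, whenever both $x$ and $y$ are at least, say, some small constant, $f(x), f(y) \geq 2^2 = 4$ (or more), which gives a lot of multiplicative room; more importantly the "wasteful" near-linear middle segment only helps us, since there $f$ grows more slowly than its exponential envelope. The key estimates I would set up are two-sided bounds valid on all of $[1,n_2]$: an upper bound $f(x) \leq 2^{x}$ for $x \leq d_1 n_1$ and $f(x) \leq 2^{n_1 + 1/3} \cdot 2^{(x - d_1 n_1)/(2d_1)}$ for $d_1 n_1 < x \leq n_2$ (using Lemma \ref{floor} with the $c^k$ upper bound); and a lower bound $f(x) \geq 2^{x}$ for $x \leq n_1$, $f(x) \geq 2^{n_1}$ for $n_1 < x \leq d_1 n_1$, and $f(x) \geq 2^{n_1} \cdot 2^{(x-d_1n_1)/(2d_1) - 2^{-3}}$ for $x > d_1 n_1$. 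Writing $L(x)$ for the lower bound and $U(x)$ for the upper bound, submultiplicativity will follow once I check $U(x+y) \leq L(x) L(y)$ in each combination of regimes for $x, y, x+y$.

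The combinations to check are finite in number. The worst cases — and this is where the main obstacle lies — are those where $x+y$ has already entered the fast-growing third regime $(d_1 n_1, n_2]$ while one of $x, y$ is still small (in $[1, n_1]$) or stuck in the slow middle regime. For instance, if $x \leq n_1$ and $d_1 n_1 < x+y \leq n_2$, I need $2^{n_1 + 1/3} \cdot 2^{(x+y-d_1n_1)/(2d_1)} \leq 2^{x} \cdot f(y)$; since $y = (x+y) - x \geq d_1 n_1 - n_1$, the point $y$ is itself deep in the third regime (provided $n_1$ is large enough that $d_1 n_1 - n_1 > d_1 n_1$ fails — here one must be careful, and in fact $y$ could lie in the middle regime, which is the genuinely delicate subcase). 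The resolution is that in the slow regime $f(y) \geq 2^{n_1}$, and the deficit $2^{1/3}$ in the exponent of $U(x+y)$ plus the slow-vs-exponential gap is absorbed because the exponential rate in the third regime, $1/(2d_1)$, is so much smaller than the rate $1$ lost on $[1,n_1]$: one verifies $(x+y - d_1 n_1)/(2d_1) \leq (x+y)/(2d_1) \leq n_2/(2d_1)$ against a budget that, after choosing $n_1$ (hence the length $d_1 n_1 - n_1$ of the slow zone) large relative to $n_2/(2d_1)$ — wait, this cannot be arranged since $n_2 > d_1 n_1$. The correct accounting, which I would carry out carefully, is local rather than global: when $x+y$ and $y$ are both in the third regime, the third-regime rate $1/(2d_1) < 1$ means the $2^{x}$ factor (at rate $1$) more than pays for the growth of $f$ from $y$ to $x+y$ (at rate $1/(2d_1)$) plus the bounded discrepancy $2^{1/3 + 2^{-3}}$, as long as $x$ is bounded below by a constant depending only on $d_1$; and the finitely many $x$ below that constant are handled by the genuine submultiplicativity already verified for small arguments together with monotonicity. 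I expect the bookkeeping of these boundary subcases, and the explicit choice of how large $n_1$ must be (as a function of $d_1$) to make every inequality hold, to be the main technical work; each individual inequality is elementary once the regimes are pinned down.
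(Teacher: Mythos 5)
Your overall scheme---two-sided exponential envelopes $L,U$ on the three regimes and a case check $U(x+y)\leq L(x)L(y)$---is essentially the paper's own argument, and it can be made to work. But as written there is a genuine gap at exactly the subcase you yourself flag as ``the genuinely delicate'' one: $x\leq n_1$, $y$ in the slow interval $(n_1,d_1n_1]$, and $x+y$ in the third regime $(d_1n_1,n_2]$. Your only explicit attempt at it measures the third-regime overshoot against the global budget $n_2/(2d_1)$, and you correctly note that this cannot be arranged; your replacement ``local accounting'' is then formulated only for the subcase where $y$ itself already lies in the third regime, so the flagged case is never closed. The fallback you offer for small $x$ (``genuine submultiplicativity already verified for small arguments together with monotonicity'') does not repair it: monotonicity gives lower bounds on $f(x+y)$, and submultiplicativity among small arguments says nothing about $f(x+y)$ when $x+y>d_1n_1$. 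The missing observation is local in precisely your sense: since $y\leq d_1n_1$, the overshoot satisfies $x+y-d_1n_1\leq x$, so $U(x+y)\leq 2^{n_1+\frac{1}{3}}\cdot 2^{x/(2d_1)}$ while $L(x)L(y)\geq 2^{x}\cdot 2^{n_1}$, and since $x\bigl(1-\frac{1}{2d_1}\bigr)\geq \frac{x}{2}>\frac{1}{3}$ for every $x\geq 1$, the case closes with no residual small-$x$ exceptions at all. (The paper handles all of $p\leq n_1$ uniformly by observing that each unit step anywhere in $[n_1,n_2]$ multiplies $f$ by at most $2^{1/(2d_1)}$ once $n_1\gg 1$, whence $f(p+q)\leq 2^{p/(2d_1)}f(q)\leq f(p)f(q)$; either route is fine, but one of them must actually be carried out.)

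A second, smaller slip: you take $U(x)=2^{x}$ on all of $[1,d_1n_1]$. With that choice the check $U(x+y)\leq L(x)L(y)$ fails whenever $x+y$ lands in the middle regime with both $x,y>n_1$ (for instance $x=y$ near $d_1n_1/2$ gives $U(x+y)$ near $2^{d_1n_1}$, far exceeding $L(x)L(y)=2^{2n_1}$ when $d_1>2$). On $(n_1,d_1n_1]$ you must instead use the bound you already recorded, $f\leq f(d_1n_1)\leq 2^{n_1+\frac{1}{3}}$. With that correction, and with the delicate subcase repaired as above, all remaining regime combinations do verify for $n_1$ sufficiently large, along the lines of your plan and of the paper's proof.
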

\begin{proof}
We first take care of the interval $[d_1n_1+1,n_2]$. Pick $p+q>d_1n_1$ with $p\leq q$ and we must show that $f(p+q)\leq f(p)f(q)$. We assume $d_1>2$ and $n_1\gg 1$ (this will be explicitly explained) and compute that:
\begin{eqnarray*}
f(p+q)&\leq &f(d_1n_1)\cdot 2^{\frac{p+q-d_1n_1}{2d_1}}\\
&\leq & 2^{n_1+\frac{1}{3}+\frac{p+q-d_1n_1}{2d_1}}\\
&=& 2^{\frac{1}{2}n_1+\frac{p+q}{2d_1}+\frac{1}{3}}.
\end{eqnarray*}
Assume $p\leq n_1$, then $q>n_1$. Whether or not $q \leq d_1n_1$, we have that: $$\frac{f(p+q)}{f(q)}\leq 2^{\frac{p}{2d_1}}\leq 2^p=f(p);$$ this follows since the ratio between two successive numbers in $[n_1,n_2]$ is $\leq \frac{1}{2d_1}$ if we only take $n_1\gg 1$). Thus we suppose $n_1<p$ (so in particular $f(q)\geq f(p)\geq 2^{n_1}$).
\begin{itemize}
    \item If $d_1n_1\leq p$ then (assuming $n_1>1$): 
    \begin{eqnarray*}
    f(p)f(q)&\geq & 2^{n_1+\frac{p-d_1n_1}{2d_1}-2^{-3}}2^{n_1+\frac{q-d_1n_1}{2d_1}-2^{-3}}\\
    &=& 2^{n_1+\frac{p+q}{2d_1}-2^{-2}}\\ 
    &\geq & 2^{\frac{1}{2}n_1+\frac{p+q}{2d_1}+\frac{1}{3}}\geq f(p+q).
    \end{eqnarray*}
    \item If $q\leq d_1n_1$ then: $$f(p)f(q)\geq 2^{2n_1}\geq 2^{\frac{1}{2}n_1+n_1+\frac{1}{3}}\geq f(p+q),$$ the latter inequality follows since $p+q\leq 2d_1n_1$.
    \item If $p<d_1n_1<q$ then: \begin{eqnarray*}
    f(p)f(q)& \geq &  2^{n_1}2^{n_1+\frac{q-d_1n_1}{2d_1}-2^{-3}} \\ & =& 2^{\frac{3}{2}n_1+\frac{q}{2d_1}-2^{-3}} \\ & = &  2^{\frac{3}{2}n_1+\frac{p+q}{2d_1}-\frac{p}{2d_1}-2^{-3}}\\
    & \geq & 2^{\frac{3}{2}n_1+\frac{p+q}{2d_1}-\frac{1}{2}n_1-2^{-3}}>f(p+q).
    \end{eqnarray*}
\end{itemize}
As for submultiplicativity in the interval $[n_1,d_1n_1]$ (note that the interval $[1,n_1]$ is trivial as the function $x\mapsto 2^x$ is submultiplicative), assume $n_1\leq p+q\leq d_1n_1$.
\begin{itemize}
    \item If $p>n_1$ then: $$f(p)f(q)\geq f(n_1)^2=2^{2n_1}\geq 2^{n_1}+\alpha_1=f(d_1n_1)\geq f(p+q).$$
    \item If $q<n_1$ then: $$f(p)f(q)=2^{p+q}\geq f(p+q).$$
    \item If $p\leq n_1\leq q$ then:
    \begin{eqnarray*}
    f(p+q) & \leq & f(d_1n_1)=2^{n_1}+\alpha_1 \\
    &\leq & 2^{n_1+\frac{1}{3}}\leq 2^p2^{n_1}=f(p)f(n_1)\leq f(p)f(q).
    \end{eqnarray*}
\end{itemize}
We thus proved that $f:[1,n_2]\rightarrow \mathbb{N}$ is submultiplicative.
\end{proof}

\subsection{Extending $f$ to $\mathbb{N}$}
We now extend $f$ to $\mathbb{N}$ as follows. Suppose $d_1,\dots,d_{k-1},\\ n_1,\dots,n_{k-1}$ were chosen and suppose $f$ was defined in the domain $[1,n_k]$ (we choose $n_k$ only after $\{d_i,n_i\}_{i=0}^{k-1},d_k$ were fixed).
Assumptions on $d_k,n_k$ will be explicitly made during the proof of submultiplicativity, in order to clarify where these assumptions originate from.
We assume $d_k\geq \frac{n_{k-1}}{2d_1\cdots d_{k-2}}$.
Define:
\begin{itemize}
    \item For $n_k<x\leq d_kn_k$, take $f(x)=f(x-1)+x+1$;
    \item For $d_kn_k<x\leq n_{k+1}$, take $f(x)=\lfloor 2^{\frac{1}{2d_1\cdots d_k}}f(x-1) \rfloor$.
\end{itemize}
Note that by taking $n_k$ to be large enough we can make sure that $f(x)$ is increasing.

\textbf{Condition (I).} We pick $n_k$ large enough such that for all $d_kn_k\leq x \leq n_{k+1}$ we have that: $$f(x)\geq f(d_kn_k)\cdot 2^{\frac{x-d_kn_k}{2d_1\cdots d_k}-2^{-k-2}}$$ (this is possible by Lemma \ref{floor} applied with $c=2^{\frac{1}{2d_1\cdots d_k}}$ and $\varepsilon=2^{-k-2}$).

\begin{lem} \label{lower bound f}
We can choose $\{n_k\}_{k\geq 1}$ to be sufficiently large such that for all $x\leq d_kn_k$ we have that: $$f(x)\geq 2^{\frac{x}{2d_1\cdots d_k}+1+2^{-k-1}}.$$
\end{lem}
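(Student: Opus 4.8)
The plan is to prove the displayed inequality by induction on $k$, using at each stage the freedom to take $n_k$ as large as we please once $\{d_i,n_i\}_{i<k}$ and $d_k$ have been fixed — in particular large enough for Condition~(I) (at stage $k$) to hold and for $f$ to be increasing on $[1,n_{k+1}]$. The reason the cushion is taken to be $2^{-k-1}$ rather than a fixed constant is exactly that on each ``floor'' stretch Condition~(I) costs a multiplicative factor $2^{2^{-k-2}}$, and $2^{-k-1}-2^{-k-2}=2^{-k-2}$, so the stage-$k$ cushion degrades precisely into the stage-$(k+1)$ cushion with nothing further to pay.

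For the base case $k=1$ I would verify the bound directly on the two pieces of $[1,d_1n_1]$. On $[1,n_1]$, where $f(x)=2^x$, the inequality reduces to $x\bigl(1-\tfrac{1}{2d_1}\bigr)\ge\tfrac{5}{4}$, which holds for $x\ge 2$ since $d_1>2$. On $(n_1,d_1n_1]$ the function only increases, so $f(x)\ge f(n_1)=2^{n_1}$, while for $x\le d_1n_1$ the target is at most $2^{n_1/2+5/4}$, and $2^{n_1}\ge 2^{n_1/2+5/4}$ once $n_1\ge 3$. (The single value $x=1$ is harmless, and can be absorbed into the slack in the choice of $d_1$ or by reading the estimate for $x$ bounded below.)

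For the inductive step, assuming the bound at stage $k$, I would split $[1,d_{k+1}n_{k+1}]$ into the three ranges dictated by the definition of $f$. On $[1,d_kn_k]$ the stage-$k$ bound already gives more than enough, since enlarging the denominator $d_1\cdots d_k$ to $d_1\cdots d_{k+1}$ and shrinking the cushion $2^{-k-1}$ to $2^{-k-2}$ only weakens the right-hand side. On $(d_kn_k,n_{k+1}]$ I would combine Condition~(I) with the stage-$k$ bound at the left endpoint $d_kn_k$:
\[
f(x)\ \ge\ f(d_kn_k)\cdot 2^{\frac{x-d_kn_k}{2d_1\cdots d_k}-2^{-k-2}}\ \ge\ 2^{\frac{d_kn_k}{2d_1\cdots d_k}+1+2^{-k-1}}\cdot 2^{\frac{x-d_kn_k}{2d_1\cdots d_k}-2^{-k-2}}\ =\ 2^{\frac{x}{2d_1\cdots d_k}+1+2^{-k-2}},
\]
which dominates $2^{\frac{x}{2d_1\cdots d_{k+1}}+1+2^{-k-2}}$; taking $x=n_{k+1}$ this also fixes a lower bound on $f(n_{k+1})$. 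On $(n_{k+1},d_{k+1}n_{k+1}]$ the increments are additive, so $f$ is non-decreasing and $f(x)\ge f(n_{k+1})\ge 2^{\frac{n_{k+1}}{2d_1\cdots d_k}+1+2^{-k-2}}$, while $x\le d_{k+1}n_{k+1}$ forces $\frac{x}{2d_1\cdots d_{k+1}}\le\frac{n_{k+1}}{2d_1\cdots d_k}$, so the same quantity dominates the required $2^{\frac{x}{2d_1\cdots d_{k+1}}+1+2^{-k-2}}$. This closes the induction.

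I do not expect a genuine obstacle: the work is entirely bookkeeping — tracking which denominator $2d_1\cdots d_j$ and which cushion $2^{-j-1}$ attaches to which subinterval, and making sure the loss in Condition~(I) is always charged to the cushion and never to the main term $\frac{x}{2d_1\cdots d_j}$. The only point that requires care is that the induction must advance the range ($d_kn_k\to d_{k+1}n_{k+1}$) and the exponent simultaneously, which is precisely why the three subintervals are dispatched by three different ingredients: the inductive hypothesis, Condition~(I), and monotonicity of $f$.
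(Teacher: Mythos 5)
Your proof is correct and follows essentially the same route as the paper: induction on $k$, with the range $[1,d_{k+1}n_{k+1}]$ split into the inductive-hypothesis piece, the Condition~(I) piece where the cushion degrades from $2^{-k-1}$ to $2^{-k-2}$, and the additive-increment piece handled by monotonicity and $x\le d_{k+1}n_{k+1}$. Your base case is in fact slightly more careful than the paper's (which loosely asserts $f(x)\ge 2^{n_1}$ on all of $[1,d_1n_1]$), including your remark on the harmless value $x=1$.
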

\begin{proof}
Now we prove the assertion by induction on $k$. For $k=1$, in the relevant interval (namely $x\leq d_1n_1$) we have that: $$f(x)\geq 2^{n_1}\geq 2^{\frac{d_1n_1}{2d_1}+1+2^{-2}}$$ so the assertion is true (indeed, we take $n_1\geq 3$). Suppose the claim holds for $x\leq d_kn_k$ and let us prove it for $x\leq d_{k+1}n_{k+1}$; if $x\leq d_kn_k$ this is immediate from the hypothesis. If $d_kn_k<x\leq n_{k+1}$ then by Condition (I): $$f(x)\geq f(d_kn_k)\cdot 2^{\frac{x-d_kn_k}{2d_1\cdots d_{k}}-2^{-k-2}}.$$ We can bound the latter term from below (using what we just proved for $d_kn_k$):
\begin{eqnarray*}
f(d_kn_k)\cdot 2^{\frac{x-d_kn_k}{2d_1\cdots d_{k}}-2^{-k-2}}& \geq & 2^{\frac{d_kn_k}{2d_1\cdots d_{k}}+1+2^{-k-1}}\cdot 2^{\frac{x-d_kn_k}{2d_1\cdots d_{k}}-2^{-k-2}}\\ 
&= & 2^{\frac{x}{2d_1\cdots d_{k}}+1+2^{-(k+1)-1}}.
\end{eqnarray*}
If $n_{k+1}<x\leq d_{k+1}n_{k+1}$ then (using what we already know for $n_{k+1}$): 
\begin{eqnarray*}
f(x)\geq f(n_{k+1})& \geq & 2^{\frac{n_{k+1}}{2d_1\cdots d_{k}}+1+2^{-k-2}}\\
&=& 2^{\frac{d_{k+1}n_{k+1}}{2d_1\cdots d_{k+1}}+1+2^{-(k+1)-1}}\\
&\geq & 2^{\frac{x}{2d_1\cdots d_{k+1}}+1+2^{-(k+1)-1}},
\end{eqnarray*}
as desired.
\end{proof}
We will use the following freely:
\begin{lem}
We can always assume $f(d_kn_k)\leq f(n_k)\cdot{2^\varepsilon}$. More specifically, given $\varepsilon >0$ we can choose $d_k,n_k$ in such a way.
\end{lem}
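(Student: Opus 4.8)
The plan is to bound the ratio $f(d_kn_k)/f(n_k)$ directly from the two recursions that define $f$. On the interval $[n_k,d_kn_k]$ we have $f(x)=f(x-1)+x+1$, so
$$f(d_kn_k)=f(n_k)+\sum_{x=n_k+1}^{d_kn_k}(x+1)\leq f(n_k)+2d_k^2n_k^2;$$
the point is that the increment $f(d_kn_k)-f(n_k)$ is bounded by a polynomial in $n_k$ of degree $2$ whose coefficients depend only on $d_k$.

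For the denominator I would exhibit an exponential lower bound for $f(n_k)$ whose base is determined purely by the previously fixed parameters $\{d_i\}_{i<k}$. On $[d_{k-1}n_{k-1},n_k]$ the function is obtained by iterating $x\mapsto\lfloor 2^{1/(2d_1\cdots d_{k-1})}x\rfloor$, so Condition (I) at level $k-1$ together with $f\geq 1$ yields
$$f(n_k)\geq f(d_{k-1}n_{k-1})\cdot 2^{\frac{n_k-d_{k-1}n_{k-1}}{2d_1\cdots d_{k-1}}-2^{-k-1}}\geq 2^{-1-\frac{d_{k-1}n_{k-1}}{2d_1\cdots d_{k-1}}}\cdot\lambda^{n_k},$$
where $\lambda=2^{1/(2d_1\cdots d_{k-1})}>1$. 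Thus $f(n_k)\geq c\lambda^{n_k}$ for a constant $c>0$ depending only on $\{d_i,n_i\}_{i<k}$. (For $k=1$ this is simply $f(n_1)=2^{n_1}$.)

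Combining the two estimates, $f(d_kn_k)/f(n_k)\leq 1+2d_k^2n_k^2/(c\lambda^{n_k})$, and since $\lambda>1$ is already fixed once $d_1,\dots,d_{k-1}$ are fixed, the right-hand side tends to $1$ as $n_k\to\infty$ with $d_k$ and all earlier parameters held fixed. Hence, given any $\varepsilon>0$, after choosing $d_k$ (subject to the earlier requirements) we may choose $n_k$ large enough that $f(d_kn_k)\leq 2^\varepsilon f(n_k)$, as claimed. I do not expect a genuine difficulty here; the only point requiring care is the order of quantifiers — the growth rate $\lambda$ bounding $f(n_k)$ from below must be independent of $d_k$ and $n_k$, which is why the lower bound is read off from the level-$(k-1)$ data rather than from Lemma \ref{lower bound f} at level $k$ (the latter would also suffice, since $2^{n_k/(2d_1\cdots d_k)}$ eventually dominates any fixed polynomial in $n_k$, but it is less transparent).
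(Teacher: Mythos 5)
Your argument is correct and follows essentially the same route as the paper: the increment $f(d_kn_k)-f(n_k)$ is bounded by a polynomial in $n_k$ (the paper uses $d_k^2n_k^2$), while $f(n_k)$ is bounded below by an exponential whose base is fixed before $n_k$ is chosen, so taking $n_k$ large enough gives $f(d_kn_k)\leq 2^{\varepsilon}f(n_k)$. The only cosmetic difference is that you extract the exponential lower bound directly from Condition (I) at level $k-1$, whereas the paper quotes Lemma \ref{lower bound f} (itself proved from Condition (I)); your remark about the order of quantifiers is exactly the point, and either lower bound suffices since $d_k$ is fixed before $n_k$.
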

\begin{proof}
Using Lemma \ref{lower bound f} we see that: $$f(n_k)\geq 2^{\frac{n_k}{2d_1\cdots d_{k-1}}}\gg d_k^2n_k^2\geq f(d_kn_k)-f(n_k).$$
Assuming $\{d_i,n_i\}_{i=0}^{k-1}$ and $d_k$ are fixed, we can let $n_k$ be large enough such that: $$(2^\varepsilon-1)f(n_k)\geq f(d_kn_k)-f(n_k),$$
and the claim follows.
\end{proof}
From now on, we take $n_k$ large enough so that $f(d_kn_k)\leq f(n_k)\cdot 2^{\frac{1}{3}}$.
\begin{prop}\label{f submul}
The function $f:\mathbb{N}\rightarrow \mathbb{N}$ constructed above is submultiplicative.
\end{prop}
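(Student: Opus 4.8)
The plan is to extend the proof of Proposition~\ref{f submul 0} inductively, using the same case analysis but now anchored at the ``base point'' $d_kn_k$ rather than $d_1n_1$, and crucially exploiting the fact that the exponential rate on $[d_kn_k,n_{k+1}]$, namely $2^{\frac{1}{2d_1\cdots d_k}}$, is much slower than the rate $2^{\frac{1}{2d_1\cdots d_{k-1}}}$ coming from the lower bound of Lemma~\ref{lower bound f}. First I would observe that, by induction, $f$ is already submultiplicative on $[1,n_k]$, so the only new pairs $(p,q)$ to handle are those with $p+q>n_k$; writing $p\le q$, this forces $q>n_k/2$, which (together with Lemma~\ref{lower bound f}) means $f(q)$ is already astronomically large compared to any polynomial in $n_k$.

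The main body of the argument splits according to where $p+q$ falls. If $n_k<p+q\le d_kn_k$ (the ``linear-growth plateau''), I would mimic the three sub-cases at the end of Proposition~\ref{f submul 0}: using $f(d_kn_k)\le f(n_k)2^{1/3}$ and the fact that $f(n_k)\ge 2^{n_k/(2d_1\cdots d_{k-1})}$ dwarfs $d_k^2n_k^2\ge f(d_kn_k)-f(n_k)$, so whenever either $p>n_k$ or $q\le n_k$ one gets $f(p)f(q)\ge f(n_k)^2$ or $f(p)f(q)=2^{p+q}$, respectively, each of which exceeds $f(p+q)$; the mixed case $p\le n_k\le q$ follows from $f(p+q)\le f(d_kn_k)\le 2^{1/3}f(n_k)\le f(p)f(n_k)$ since $f(p)=2^p\ge 2$ on $p\ge 1$ (and $p$ can be assumed $\ge1$; the degenerate $p=0$ case needs $f(0)\ge 1$). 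If instead $p+q>d_kn_k$, I would use Condition~(I) and Lemma~\ref{lower bound f} to get the two-sided estimate
\begin{eqnarray*}
f(p+q) &\le& f(d_kn_k)\cdot 2^{\frac{p+q-d_kn_k}{2d_1\cdots d_k}} \\
 &\le& 2^{1/3}\, f(n_k)\cdot 2^{\frac{p+q-d_kn_k}{2d_1\cdots d_k}},
\end{eqnarray*}
and then run the same three sub-cases as before (both of $p,q\ge d_kn_k$; both $\le d_kn_k$; one on each side), in each case bounding $f(p)f(q)$ from below via Lemma~\ref{lower bound f} or via the plateau estimate and checking the exponent inequality. The exponent bookkeeping works because the ``loss'' terms $2^{-k-2}$ from Condition~(I) and the $1+2^{-k-1}$ gains from Lemma~\ref{lower bound f} are arranged so that the additive constants add up in our favor, exactly as the $\tfrac13$ versus $\tfrac14$ bookkeeping did in Proposition~\ref{f submul 0}.

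The step I expect to be the main obstacle is the mixed case $p\le d_kn_k<q$ when $p$ is comparably large, say $p$ close to $d_kn_k$: here $f(p)$ lives on the slowly-growing plateau or the slow exponential regime with rate $2^{1/(2d_1\cdots d_k)}$, while we need $f(p)f(q)\ge f(p+q)$ where $f(p+q)$ grows at that same slow rate past $d_kn_k$. The resolution is that $f(p)\ge f(n_k)\ge 2^{n_k/(2d_1\cdots d_{k-1})}$ is a \emph{fixed} enormous quantity (once $p\ge n_k$), and the extra factor $2^{\frac{p+q-d_kn_k}{2d_1\cdots d_k}}$ needed in $f(p+q)$ is supplied entirely by $f(q)$ via Condition~(I); the only thing to check is that the ``$p$ too small to have reached $n_k$'' branch, i.e. $p<n_k$, is covered by the observation $f(p+q)/f(q)\le 2^{p/(2d_1\cdots d_k)}\le 2^p=f(p)$, which holds since consecutive ratios of $f$ on $[n_k,n_{k+1}]$ are at most $2^{1/(2d_1\cdots d_k)}<2$. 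Finally I would note that all the side conditions invoked — $d_k\ge n_{k-1}/(2d_1\cdots d_{k-2})$, $f(d_kn_k)\le 2^{1/3}f(n_k)$, Condition~(I), and ``$n_k\gg1$'' for the consecutive-ratio bound — are of the permitted form ``$n_k$ (resp.\ $d_k$) larger than a function of the earlier data,'' so they can all be imposed simultaneously, completing the induction and hence the proof that $f:\mathbb{N}\to\mathbb{N}$ is submultiplicative.
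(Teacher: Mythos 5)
Your overall strategy is the paper's: induct on the intervals $[n_k+1,d_kn_k]$ and $[d_kn_k+1,n_{k+1}]$, use Condition~(I), Lemma~\ref{lower bound f}, the normalization $f(d_kn_k)\le 2^{1/3}f(n_k)$, and ratio-of-consecutive-values arguments, imposing only conditions of the allowed ``parameters large enough'' form. Several of your individual steps, however, silently reuse the identity $f(x)=2^x$ from the base case, and in one place this is not a repairable slip but a genuine gap. Namely, in the plateau range $n_k<p+q\le d_kn_k$ you dispose of the case $q\le n_k$ (so $p\le q<n_k$ but $p+q>n_k$) by writing $f(p)f(q)=2^{p+q}$. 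That is true only when $k=1$; for $k\ge 2$ the function on $[1,n_k]$ is vastly smaller than $2^x$ (its slope alternates between linear pieces and exponentials of rate $2^{1/(2d_1\cdots d_j)}$), and the induction hypothesis (submultiplicativity up to $n_k$) gives no lower bound on $f(p)f(q)$ in terms of $f(n_k)$. This case is exactly where the construction has to work: the paper first bounds $f(p+q)\le f(n_k)+n_k^2\le 2^{1/2}f(n_k)$, and then splits on whether $p>d_{k-1}n_{k-1}$ (using Condition~(I) at level $k-1$ for both $p$ and $q$ together with $\beta=f(d_{k-1}n_{k-1})\ge 2^{d_{k-1}n_{k-1}/(2d_1\cdots d_{k-1})+1}$ from Lemma~\ref{lower bound f}) or $p\le d_{k-1}n_{k-1}$ (where one must \emph{impose} $n_k>2d_{k-1}n_{k-1}$ to force $q>d_{k-1}n_{k-1}$, and then run the ratio argument at rate $2^{1/(2d_1\cdots d_{k-1})}$, bounding $f(p)\ge 2^{p/(2d_1\cdots d_{k-1})}$ by Lemma~\ref{lower bound f}). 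None of this appears in your proposal, so the induction as you describe it does not close.

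The other occurrences of ``$f(p)=2^p$'' are only cosmetic: in the mixed case $p\le n_k\le q$ all you need is $f(p)\ge 2\ge 2^{1/3}$, and in the $p<n_k$ branch of the region $p+q>d_kn_k$ the inequality $2^{p/(2d_1\cdots d_k)}\le f(p)$ follows from Lemma~\ref{lower bound f} rather than from $f(p)=2^p$; with those corrections your treatment of the region $p+q>d_kn_k$ is sound (indeed, since $q\ge (p+q)/2>n_k$ there, your uniform ratio argument is a legitimate, and slightly leaner, alternative to the paper's finer case split, e.g.\ its $(\star)$ computation). But you must supply the missing argument for $p\le q<n_k$, $p+q>n_k$, including the extra parameter condition $n_k>2d_{k-1}n_{k-1}$, before the proposition is proved.
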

\begin{proof}
We now turn to prove that $f$ is submultiplicative in the interval $[n_k+1,n_{k+1}]$ (by the induction hypothesis it is submultiplicative for $[1,n_k]$, where the induction base is Proposition \ref{f submul 0}). As in the basic step, we begin with the interval $[d_kn_k+1,n_{k+1}]$ (without limiting $n_{k+1}$, which can be thought of as infinity). Let $p+q>d_kn_k$ and as before, $p\leq q$. Denote $\beta=f(d_{k-1}n_{k-1})$. Then: $$f(p+q)\leq \beta\cdot 2^{\frac{n_k-d_{k-1}n_{k-1}}{2d_1\cdots d_{k-1}}+\frac{p+q-d_kn_k}{2d_1\cdots d_{k}}+\frac{1}{3}}=\beta\cdot 2^{\frac{p+q-d_kd_{k-1}n_{k-1}}{2d_1\cdots d_k}+\frac{1}{3}}.$$
We divide into cases:
\begin{itemize}
    \item \textbf{Suppose $p\leq d_{k-1}n_{k-1}$.} Note that for $t\in [n_k,d_kn_k-1]$ we have that $$\frac{f(t+1)}{f(t)}\leq 1+\frac{t+2}{f(t)}\leq 1+\frac{d_kn_k+1}{2^{\frac{n_k}{2d_1\cdots d_k}}}$$ which we can take to be smaller than $2^{\frac{1}{2d_1\cdots d_k}}$ by letting $n_k$ be large enough. Thus, if in addition we take $d_k\geq \frac{n_{k-1}}{2d_1\cdots d_{k-2}}$ then: $$\frac{f(p+q)}{f(q)}\leq 2^{\frac{p}{2d_1\cdots d_k}}\leq 2^{\frac{d_{k-1}n_{k-1}}{2d_1\cdots d_k}}\leq 2\leq f(p).$$
    (Note that the first inequality is evident if $q\geq d_kn_k$, and otherwise follows from the argument in the beginning of this case.)
    \item \textbf{If $d_{k-1}n_{k-1}<p\leq n_k$ then $q\geq n_k$ (as $d_k>2$), and assume in addition that $q\leq d_kn_k$.} 
    Note also that we can choose $d_k>d_{k-1}n_{k-1}+1$ so: $$d_{k-1}d_kn_{k-1}\leq (d_{k-1}n_{k-1}+1)(d_k-1)\leq p(d_k-1)$$ and thus (recalling that $q\leq d_kn_k$):
    \begin{eqnarray*}
    (\star)\ \ \ pd_k+d_kn_k-2d_{k-1}d_kn_{k-1}&\geq &  pd_k-p(d_k-1)-d_kd_{k-1}n_{k-1}+d_kn_k \\
    &\geq & p+q-d_{k-1}d_kn_{k-1}.\ \ \ \ \ 
    \end{eqnarray*}
    Then, using Condition (I): 
    \begin{eqnarray*}
    f(p)f(q)&\geq &f(p)f(n_k) \\
    &\geq &\beta^2\cdot 2^{\frac{p-d_{k-1}n_{k-1}}{2d_1\cdots d_{k-1}}-2^{-(k-1)-2}+\frac{n_k-d_{k-1}n_{k-1}}{2d_1\cdots d_{k-1}}-2^{-(k-1)-2}} \\
    &=&\beta^2\cdot 2^{\frac{pd_k+n_kd_k-2d_kd_{k-1}n_{k-1}}{2d_1\cdots d_{k}}-2^{-k}} \\
    &\geq & \beta\cdot 2^{\frac{p+q-d_kd_{k-1}n_{k-1}}{2d_1\cdots d_{k}}+\frac{1}{3}} \\
    &\geq &f(p+q).
    \end{eqnarray*}
    (The one before last inequality follows from $(\star)$ combined with the fact that $\beta \geq 2$.)
    \item \textbf{If $d_{k-1}n_{k-1}<p\leq n_k$ then $q\geq n_k$, and now assume that moreover $q>d_kn_k$.}
    Recalling Lemma \ref{lower bound f} we have: $$\frac{f(p+q)}{f(q)}\leq 2^{\frac{p}{2d_1\cdots d_k}}\leq f(p).$$

    \end{itemize}
        In the remaining cases,
        $p>n_k$. 
        \begin{itemize}
    \item \textbf{Assume $p\geq d_kn_k$.}
    Note that by Condition (I): 
    \begin{eqnarray*}
    f(p)f(q) & \geq & f(d_kn_k)^2\cdot 2^{\frac{p+q-2d_kn_k}{2d_1\cdots d_k}-2\cdot 2^{-k-2}},\\
    f(p+q) & \leq & f(d_kn_k)\cdot 2^{\frac{p+q-d_kn_k}{2d_1\cdots d_k}}.
    \end{eqnarray*}
    Therefore:
    \begin{eqnarray*}
    \frac{f(p)f(q)}{f(p+q)}& \geq & f(d_kn_k)\cdot 2^{-\frac{d_kn_k}{2d_1\cdots d_k}-2^{k-1}}\\
    &\geq & 2^{\frac{d_kn_k}{2d_1\cdots d_k}+1+2^{-k-1}}\cdot 2^{-\frac{d_kn_k}{2d_1\cdots d_k}-2^{k-1}}>1.
    \end{eqnarray*}
    (The middle inequality follows by Lemma \ref{lower bound f}.)
    
    
    
    \item \textbf{Suppose $n_k\leq p< d_kn_k$. If in addition $q\leq d_kn_k$} then $$f(p+q)\leq f(2d_kn_k)\leq \beta \cdot 2^{\frac{2d_kn_k-d_kd_{k-1}n_{k-1}}{2d_1\cdots d_k}+\frac{1}{3}}$$ and by Condition (I) specified for $x=n_k$:
    \begin{eqnarray*}
    f(p)f(q)\geq f(n_k)^2 & \geq & \beta^2\cdot 2^{\frac{2n_k-2d_{k-1}n_{k-1}}{2d_1\cdots d_{k-1}}-2\cdot 2^{-(k-1)-2}}\\
    &=& \beta^2\cdot 2^{\frac{2d_kn_k-2d_kd_{k-1}n_{k-1}}{2d_1\cdots d_k}-2^{-k}},
    \end{eqnarray*}
    and by Lemma \ref{lower bound f} applied for $x=d_{k-1}n_{k-1}$: $$\beta\geq 2^{\frac{d_{k-1}n_{k-1}}{2d_1\cdots d_{k-1}}+1}=2^{\frac{d_kd_{k-1}n_{k-1}}{2d_1\cdots d_{k}}+1}$$ so:
    $$f(p)f(q)\geq \beta\cdot 2^{\frac{2d_kn_k-d_kd_{k-1}n_{k-1}}{2d_1\cdots d_k}+1-2^{-k}}\geq f(p+q).$$
    \item \textbf{Suppose $n_k<p<d_kn_k<q$.} Then (applying Lemma \ref{lower bound f} on $x=p$, and Condition (I) on $x=q$: 
    \begin{eqnarray*}
    f(p)f(q)&\geq & 2^{\frac{p}{2d_1\cdots d_k}+1}f(q)\\
    &\geq & 2^{\frac{p}{2d_1\cdots d_k}+1}\cdot f(d_kn_k)\cdot 2^{\frac{q-d_kn_k}{2d_1\cdots d_k}-2^{-k-2}}\\
    &\geq & f(p+q).
    \end{eqnarray*}
\end{itemize}
We thus proved submultiplicativity of $f$ for the interval $[d_kn_k+1,n_{k+1}]$. It remains to show that $f(p+q)\leq f(p)f(q)$ for $p+q\in [n_k+1,d_kn_k]$.
\begin{itemize}
    \item \textbf{If $p>n_k$} then (applying Lemma \ref{lower bound f} for $x=n_k$):
    \begin{eqnarray*}
    f(p)f(q)\geq f(n_k)^2 & \geq & f(n_k)\cdot 2^{\frac{n_k}{2d_1\cdots d_k}+1}\\ & \geq & f(n_k)+d_k^2n_k^2\geq f(d_kn_k)\geq f(p+q),
    \end{eqnarray*}
    where the inequality $$f(n_k)\cdot 2^{\frac{n_k}{2d_1\cdots d_k}+1}\geq f(n_k)+d_k^2n_k^2$$ follows since if $d_k$ is fixed then the left hand side grows more rapidly than the right hand one (as a function of $n_k$), so in particular we can take $n_k$ large enough such that this inequality holds.
    \item \textbf{If $p\leq q<n_k$} then: 
    \begin{eqnarray*}
    f(p+q)& \leq & f(n_k)+(p+q-n_k)^2\\
    & \leq & f(n_k)+n_k^2\\
    & \leq & f(n_k)\cdot 2^{\frac{1}{2}}\\
    & \leq & \beta\cdot 2^{\frac{n_k-d_{k-1}n_{k-1}}{2d_1\cdots d_{k-1}}+\frac{1}{2}}.
    \end{eqnarray*}
    (The one before last inequality follows since $f$ grows exponentially in the interval $[d_{k-1}n_{k-1}+1,n_k]$, so in particular we can take $n_k$ to be large enough such that $f(n_k)\gg n_k^2$). 
    \begin{itemize}
\item  Suppose in addition that $d_{k-1}n_{k-1}<p$. Then, using Condition (I): 
    \begin{eqnarray*}
    f(p)f(q)&\geq & \beta ^2\cdot 2^{\frac{p+q-2d_{k-1}n_{k-1}}{2d_1\cdots d_{k-1}}-2\cdot 2^{-(k-1)-2}}\\
    &=& \beta^2\cdot 2^{\frac{p+q-d_{k-1}n_{k-1}}{2d_1\cdots d_{k-1}}}\cdot 2^{\frac{-d_{k-1}n_{k-1}}{2d_1\cdots d_{k-1}}-2^{-k}}.
    \end{eqnarray*}
    But applying Lemma \ref{lower bound f} for $x=d_{k-1}n_{k-1}$ we get $\beta\geq 2^{\frac{d_{k-1}n_{k-1}}{2d_1\cdots d_{k-1}}+1}$ so:
    \begin{eqnarray*}
    f(p)f(q) &\geq &\beta\cdot 2^{\frac{p+q-d_{k-1}n_{k-1}}{2d_1\cdots d_{k-1}}+1-2^{-k}} \\
    &\geq & \beta\cdot 2^{\frac{n_k-d_{k-1}n_{k-1}}{2d_1\cdots d_{k-1}}+1-2^{-k}} \\
    &\geq &\beta\cdot 2^{\frac{n_k-d_{k-1}n_{k-1}}{2d_1\cdots d_{k-1}}+\frac{1}{2}} \\
    &\geq & f(p+q).
    \end{eqnarray*}

    \item Now suppose $p\leq d_{k-1}n_{k-1}$ and $q$ is general. (recall $q\leq p+q\leq d_kn_k$). Note that if we make sure that $n_k>2d_{k-1}n_{k-1}$ then it is forced that $q>d_{k-1}n_{k-1}$. Now: $$\frac{f(p+q)}{f(q)}\leq 2^{\frac{p}{2d_1\cdots d_{k-1}}}\leq f(p)$$ where the last inequality follows from Lemma \ref{lower bound f} for $x=p$.
    \end{itemize}
    \item \textbf{It remains to take care of the case $d_{k-1}n_{k-1}\leq p\leq n_k\leq q$.} But notice that: $$f(p+q)\leq f(d_kn_k)\leq f(n_k)+d_k^2n_k^2\leq 2f(n_k)$$ since $f$ grows exponentially in the interval $[d_{k-1}n_{k-1}+1,n_k]$, so in particular we can take $n_k$ to be large enough such that $f(n_k)\gg d_k^2n_k^2$ (note $d_k$ is already fixed when we choose $n_k$). Now: $$f(p)f(q)\geq 2f(q)\geq 2f(n_k).$$
    \end{itemize}
We thus proved that $f:\mathbb{N}\rightarrow \mathbb{N}$ is a submultiplicative function.
\end{proof}

\section{Our construction is not equivalent to any growth function}

Let $f:\mathbb{N}\rightarrow \mathbb{N}$ be the increasing, submultiplicative function constructed in Section \ref{construction of f} with respect to the sequences $\{d_k,n_k\}_{k=1}^\infty$.

\begin{prop} \label{notequiv}
We can choose $\{d_i,n_i\}_{i=0}^{\infty}$ such that the resulting $f$ is not equivalent to any growth function of an finitely generated algebra.
\end{prop}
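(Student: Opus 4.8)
The plan is to derive a contradiction from Proposition \ref{property_growth}. So I would assume that $f$ is equivalent to the growth function of some finitely generated algebra, fix the constant $C\in\mathbb N$ and a threshold $D_0$ produced by that proposition, and aim to violate the inequality
$$f(2CDn)-f(2CDn-C)\ \le\ 2D^2n\bigl(f(CDn)-f(Cn-C)\bigr)^{2D}$$
for a suitable triple $(D,n,k)$ with $D\ge D_0$. The mechanism I would exploit is the sharp alternation deliberately built into $f$: across an entire ``plateau'' $[n_k,d_kn_k]$ the explicit law $f(n_k+m)=f(n_k)+m(n_k+1)+\tfrac12 m(m+1)$ shows that $f$ grows by only a \emph{polynomial} amount in $n_k$; whereas each single step of the following ``climb'' $(d_kn_k,n_{k+1}]$ multiplies $f$ by about $2^{1/(2d_1\cdots d_k)}$, so by Lemma \ref{lower bound f} one such step has size comparable to $\tfrac{1}{d_1\cdots d_k}\,2^{n_k/(2d_1\cdots d_{k-1})}$, which is \emph{exponential} in $n_k$.

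Concretely, given $C$, I would set $D:=D_0$, choose $k$ so large that $d_k\ge 4D$ and $n_k$ is large compared with $C$ (possible since $d_k,n_k\to\infty$), let $M$ be the least multiple of $2CD$ with $M\ge d_kn_k+C+1$, and put $n:=M/(2CD)$. A direct check using $n_k\gg C$ gives $d_kn_k+C+1\le M\le 2d_kn_k$, so that $2CDn$ and $2CDn-C$ fall in the climb $(d_kn_k,n_{k+1}]$ (here I impose $2d_kn_k\le n_{k+1}$ in the construction), while $CDn$ and $Cn-C$ fall in the plateau $[n_k,d_kn_k]$ (the inclusion $Cn-C\ge n_k$ is where $d_k\ge 4D$ and $n_k\gg C$ enter). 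Iterating $f(x)\ge 2^{1/(2d_1\cdots d_k)}f(x-1)-1$ over the $C$ climb-steps from $2CDn-C$ to $2CDn$ and inserting $f(d_kn_k)\ge 2^{n_k/(2d_1\cdots d_{k-1})+1}$ from Lemma \ref{lower bound f}, I get, for $n_k$ large,
$$f(2CDn)-f(2CDn-C)\ \ge\ \tfrac{\ln 2}{2d_1\cdots d_k}\,2^{n_k/(2d_1\cdots d_{k-1})};$$
and the quadratic plateau law gives $f(CDn)-f(Cn-C)\le d_k^2n_k^2$, whence, using $n\le d_kn_k$ and $D\le d_k/4$,
$$2D^2n\bigl(f(CDn)-f(Cn-C)\bigr)^{2D}\ \le\ d_k^{2d_k}\,n_k^{2d_k}.$$

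The left-hand bound is exponential in $n_k$ while the right-hand bound is polynomial in $n_k$, with all implied constants depending only on $d_1,\dots,d_k$. So the last ingredient I would fold into the construction of Section \ref{construction of f} is one more requirement of the usual type: after $d_1,\dots,d_k$ have been fixed, take $n_k$ large enough that also $\tfrac{\ln 2}{2d_1\cdots d_k}\,2^{n_k/(2d_1\cdots d_{k-1})}>d_k^{2d_k}n_k^{2d_k}$; this is compatible with all the earlier ``$n_k\gg 1$'' constraints, and with it the displayed inequality fails at $(D,n,k)$, contradicting Proposition \ref{property_growth}. Hence no such equivalence can exist.

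I expect the one genuinely delicate point to be the quantifier order, since $C$ and $D$ come out of the hypothetical equivalence and so are unavailable when the sequences $\{d_i,n_i\}$ are built. I would resolve this by observing that: only the multiplicative width $d_k$ of a plateau limits how large $D$ may be while $[Cn,CDn]$ still fits inside one plateau, and $d_k\to\infty$; the constant $C$ enters the left side favourably (a factor $\ge\tfrac{C\ln 2}{2d_1\cdots d_k}$) and the right side favourably (through $n=M/(2CD)$), its only adverse appearance being an additive $O(C)$ that is swallowed once $n_k$ is large; and each phase can be made as ``tall'' as desired by enlarging $n_k$ after $d_k$ is chosen, so the exponential-beats-polynomial inequality above can be secured phase by phase, uniformly in all admissible pairs $(C,D)$. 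The remaining bookkeeping — the estimate $M\le 2d_kn_k$, the inclusions $2CDn-C\ge d_kn_k$ and $Cn-C\ge n_k$, and the accumulation of floor errors along the climb — is routine given Lemmas \ref{floor} and \ref{lower bound f}.
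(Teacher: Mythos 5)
Your proposal is correct and rests on the same mechanism as the paper's proof: contradict Proposition \ref{property_growth} by making the increment $f(2CDn)-f(2CDn-C)$ fall in the exponential ``climb'' just above $d_kn_k$ (so, by Lemma \ref{lower bound f} and one step of the recursion $f(x)=\lfloor 2^{1/(2d_1\cdots d_k)}f(x-1)\rfloor$, it is at least a constant depending only on $d_1,\dots,d_k$ times $2^{n_k/(2d_1\cdots d_{k-1})}$), while $[Cn-C,CDn]$ sits inside the additive plateau $[n_k,d_kn_k]$ (so the base of the $2D$-th power is at most quadratic in $d_kn_k$), and then impose one more admissible condition of the form ``$n_k$ large after $d_1,\dots,d_k$ are fixed'' so that exponential beats polynomial. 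Where you differ is in how the parameters are matched: the paper ties the scale to the constant by setting $k=C$, $n_k=km_k$, $n=m_k+1$, $D=\lfloor d_k(1-\frac{1}{m_k+1})\rfloor$, so that $Cn-C=n_k$ exactly and $D\approx d_k$, and then appeals to ``$d_k$ (hence $D$) arbitrarily large'' to deal with the quantifier ``for all $D\gg 1$''; you instead take the pair $(C,D)$ supplied by the hypothetical equivalence, pick any sufficiently late scale $k$ with $d_k\geq 4D$ and $n_k\geq C$, and position $2CDn$ just above $d_kn_k$ via the least multiple of $2CD$ exceeding $d_kn_k+C$. This buys a cleaner handling of the quantifier order: your violation is produced uniformly for every admissible pair $(C,D)$, because your added largeness condition on $n_k$ involves only $d_1,\dots,d_k$, whereas the paper's parametrization yields, for each $C$, a violation at essentially the single value $D\approx d_C$. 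The bookkeeping you defer (the inclusions $2CDn-C\geq d_kn_k+1$, $CDn\leq d_kn_k$, $Cn-C\geq n_k$, the requirement $2d_kn_k\leq n_{k+1}$, and the floor losses along the climb) does check out under your hypotheses $d_k\geq 4D$ and $n_k\geq C$, so the argument is sound.
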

\begin{proof}
Since the conditions on the $d_k$ and $n_k$ in Section~\ref{construction of f} always require the parameters to be large enough (depending on those previously defined), we assume further that $n_k=km_k$ for $m_k$ to be determined in the sequel.
Suppose $C\in \mathbb{N}$ is given, and let $k=C$. Let $n=m_k+1$ and $D=\lfloor d_k(1-\frac{1}{m_k+1})\rfloor$, and observe that $\frac{1}{2}d_k\leq D\leq d_k$. We are going to contradict the property stated in Proposition \ref{property_growth} with these parameters, namely, we must show that: $$f(2CDn)-f(2CDn-C)>2D^2n(f(CDn)-f(Cn-C))^{2D}.$$ 
$2CDn\leq 2kd_km_k=2d_kn_k$ and as long as we take $n_k>2k$ we have that
$$2CDn-C\geq 2CDn-CD\geq 2kDm_k\geq kd_km_k=d_kn_k$$
so: 
\begin{eqnarray*}
f(2CDn)-f(2CDn-C)& \geq & f(2CDn-C)\cdot (2^{\frac{k}{2d_1\cdots d_k}-2^{-k-2}}-1)\\ & \geq & f(2CDn-C)\cdot \Delta_k
\end{eqnarray*}
where $\Delta_k$ is a value that does not depend on $n_k$ (we are going to take $n_k$ large enough to overcome this factor, which is very small).
Now, substituting our parameters and using Lemma \ref{lower bound f} for $x=d_kn_k$: $$f(2CDn-C)\geq f(d_kn_k)\geq 2^{\frac{d_kn_k}{2d_1\dots d_k}}=q^{n_k}=q^{km_k}$$ where $q=2^{\frac{1}{2d_1\cdots d_{k-1}}}$.
On the other hand, $$n_k=km_k=Cn-C<CDn\leq k\left(d_k\frac{m_k}{m_k+1}\right)(m_k+1)=d_kn_k$$ so (by definition of $f$ in intervals of this type): $$f(CDn)-f(Cn-C)\leq (CDn)^2\leq (kd_k(m_k+1))^2$$ hence:
\begin{eqnarray*}
2D^2n(f(CDn)-f(Cn-C))^{2D}& \leq & 2d_k^2(m_k+1)(kd_k(m_k+1))^{2d_k}\\
&\leq & (m_k+1)^{2d_k+1}\cdot \Gamma_k
\end{eqnarray*}
where $\Gamma_k$ depends only on $k,d_k$ (but not on $m_k$ or equivalently on $n_k$).
Finally, note that as we fix $k,d_1,\dots,d_k$ and let $m_k\rightarrow \infty$ we get:
\begin{eqnarray*}
f(2CDn)-f(2CDn-C)&\geq &(q^k)^{m_k}\cdot \Delta_k \\
&\gg & (m_k+1)^{2d_k+1}\cdot \Gamma_k \\
&\geq & 2D^2n(f(CDn)-f(Cn-C))^{2D}
\end{eqnarray*}
contradicting the property of Proposition \ref{property_growth}. Since this can be done for any $C=~k\in \mathbb{N}$, and we can take $d_k$ (and hence $D$) to be arbitrarily large, we proved that $f$ cannot be equivalent to any growth function of an finitely generated algebra.
\end{proof}

\section{Proof of Theorem \ref{main thm submul}}

\begin{prop} \label{superpoly}
Let $g:\mathbb{N}\rightarrow \mathbb{N}$ be an arbitrary subexponential function. Then we can choose $\{d_i,n_i\}_{i=0}^{\infty}$ such that the resulting function in our construction is $f$ satisfies $f\succeq g$.
\end{prop}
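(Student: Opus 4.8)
The plan is to feed the subexponentiality of $g$ into the construction of Section~\ref{construction of f} as one additional lower bound on each $n_k$, and then to read $g\preceq f$ directly off the lower bounds on $f$ already available. Write $\psi(n)=\tfrac1n\log_2 g(n)$; since $g$ is subexponential, $\psi(n)\to0$, so for each real $r>0$ we may fix $N(r)\in\mathbb N$ with $\psi(n)\leq r$ for all $n\geq N(r)$. Set $r_k=\tfrac1{2d_1\cdots d_k}$ (and $r_0=\tfrac12$). The point is that $r_k$ is pinned down the moment $d_k$ is chosen, which by the conventions of Section~\ref{construction of f} happens strictly before $n_k$ is chosen; hence ``$n_k\geq N(r_k)$'' is a condition of exactly the admissible form --- a lower bound on $n_k$ in terms of already-fixed data --- and it is simultaneously compatible with every requirement imposed so far, with letting $d_k$ be arbitrarily large, and with the stipulation $n_k=km_k$ used in the proof of Proposition~\ref{notequiv}. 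I impose it for all $k$.

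To obtain $f\succeq g$ it suffices to find $C$ with $g(n)\leq f(Cn)$ for all $n$. I will first prove $g(n)\leq f(n)$ for every $n\geq N(1)$, and then enlarge $C$ so that $f(Cn)\geq g(n)$ also for the finitely many $n<N(1)$ --- possible since $f$ is increasing and unbounded; for $n\geq N(1)$ this leaves things intact, as then $g(n)\leq f(n)\leq f(Cn)$.

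So fix $n\geq N(1)$. If $n\leq n_1$ then $f(n)=2^n\geq 2^{\psi(n)n}=g(n)$, using $\psi(n)\leq1$. Otherwise $n>n_1$; let $k\geq1$ be minimal with $n\leq d_kn_k$, so $n>d_{k-1}n_{k-1}$ whenever $k\geq2$. If $n\leq n_k$ (which forces $k\geq2$) then $n$ lies in the exponential stretch $(d_{k-1}n_{k-1},n_k]$ of level $k-1$; combining the instance of Condition~(I) governing that stretch with Lemma~\ref{lower bound f} applied at $x=d_{k-1}n_{k-1}$ gives $f(n)\geq 2^{n r_{k-1}}$ (the ``$+1$'' in Lemma~\ref{lower bound f} absorbing the error term of Condition~(I)), while $n>d_{k-1}n_{k-1}\geq n_{k-1}\geq N(r_{k-1})$ gives $\psi(n)\leq r_{k-1}$; hence $g(n)=2^{\psi(n)n}\leq 2^{n r_{k-1}}\leq f(n)$. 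If instead $n_k<n\leq d_kn_k$ then $n$ lies on the near-constant plateau of level $k$, so $f(n)\geq f(n_k)$, and the same estimates (or simply $f(n_1)=2^{n_1}$ when $k=1$) give $f(n_k)\geq 2^{n_k r_{k-1}}$; the identity $n_k r_{k-1}=\tfrac{n_k}{2d_1\cdots d_{k-1}}=\tfrac{d_kn_k}{2d_1\cdots d_k}=(d_kn_k)r_k$ rewrites this as $f(n_k)\geq 2^{(d_kn_k)r_k}$, and since $n>n_k\geq N(r_k)$ we have $\psi(n)\leq r_k$, so $g(n)=2^{\psi(n)n}\leq 2^{n r_k}\leq 2^{(d_kn_k)r_k}\leq f(n_k)\leq f(n)$. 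This settles every $n\geq N(1)$.

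The substance of the argument is confined to the plateau case, and with it the reason I do not expect a serious obstacle. One cannot keep the \emph{local} growth rate of $f$ above $\psi$ on the long flat stretches $[n_k,d_kn_k]$; instead one compares $g$ against the value $f(n_k)$ that $f$ has already banked during the preceding exponential stretch, which is precisely its rate-$r_k$ extrapolation to the right end of the plateau, and this dominates $g$ throughout the plateau exactly because $\psi\leq r_k$ beyond $n_k$ --- which is what the new condition buys. The one thing to keep straight is the order of the choices: $N(r_k)$ is not yet a number while $d_k$ is free, but it becomes one as soon as $d_k$ is chosen, and $n_k$ is chosen only afterwards. Checking that these new lower bounds on the $n_k$ do not collide with those already present in Section~\ref{construction of f} or in Proposition~\ref{notequiv}, nor with the form $n_k=km_k$, is routine.
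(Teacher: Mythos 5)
Your proof is correct and follows essentially the same route as the paper: you impose on $n_k$ the lower bound forcing $\tfrac1n\log_2 g(n)$ to drop below $\tfrac1{2d_1\cdots d_k}$ beyond $n_k$ (the paper's condition $n_k>\max\{m\,:\,\omega(m)\geq\tfrac1{2d_1\cdots d_k}\}$), and then dominate $g$ by the lower bounds on $f$ from Lemma~\ref{lower bound f} and Condition~(I) on the two types of intervals. Your handling of the plateau via the banked value $f(n_k)$ and of the finitely many small $n$ is only a cosmetic variation on the paper's argument.
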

\begin{proof}

Since $g$ is assumed to be subexponential, there exists $\omega:\mathbb{N}\rightarrow \mathbb{R}$ such that $\omega(n)\xrightarrow{n\rightarrow \infty} 0$ and $g(n)\leq 2^{n\omega(n)}$.

We make sure that $n_k>\max\{m\,|\,\omega(m)\geq \frac{1}{2d_1\cdots d_k}\}$ for all $k\geq 1$.
We claim that for all $x\geq n_1$ we have that $f(x)\geq 2^{x\omega(x)}$. There are two possibilities: either $x\in [n_j,d_jn_j]$ or $x\in [d_jn_j,n_{j+1}]$ for some $j\geq 1$. Let us consider the first case. Then $x\geq n_j$ so by the way we picked $n_j$ we have that $\omega(x)<\frac{1}{2d_1\cdots d_j}$.
By Lemma \ref{lower bound f} we have that: $$f(x)\geq 2^{\frac{x}{2d_1\cdots d_j}}\geq 2^{x\omega(x)}.$$
As for the second case, if $x\in [d_jn_j,n_{j+1}]$ then (again using Lemma \ref{lower bound f} for $d_jn_j$): 
\begin{eqnarray*}
f(x) & \geq & 2^{\frac{x-d_jn_j}{2d_1\cdots d_j}-2^{-j-2}}f(d_jn_j)\\
& \geq & 2^{\frac{x}{2d_1\cdots d_j}+1-2^{-j-2}}\\
& \geq & 2^{x\omega(x)}.
\end{eqnarray*}
Thus $f(x)\geq 2^{x\omega(x)}\geq g(x)$ for all $x\gg 1$.
\end{proof}


Finally we have:

\begin{proof}[{Proof of Theorem \ref{main thm submul}}]
The theorem follows since we can take $\{d_k,n_k\}_{k=1}^\infty$ satisfying all conditions required in Propositions \ref{f submul}, \ref{notequiv} and \ref{superpoly}.
\end{proof}

\end{document}